\documentclass[reqno, english, 12pt]{amsart}
\pdfoutput = 1 
\usepackage[T1]{fontenc}

\usepackage[usenames, dvipsnames]{xcolor} 
\usepackage{amsfonts,amsmath,amssymb,amsthm,bbm,mathtools,comment}
\usepackage[shortlabels]{enumitem}
\usepackage{cancel}
\usepackage[hyphens]{url} 
\usepackage[linktoc = all, hidelinks, colorlinks, unicode=true, pagebackref=true]{hyperref} 
\usepackage[capitalise, compress, nameinlink, noabbrev]{cleveref} 
  \hypersetup{linkcolor={blue!70!black}, citecolor={green!70!black}, urlcolor={blue!70!black}}

\usepackage[mathscr]{euscript}
\usepackage{adjustbox,tikz,calc,graphics,babel,standalone}
\usetikzlibrary{decorations.pathmorphing,shapes,fit,positioning}
\usetikzlibrary{shapes.misc,calc,intersections,patterns,decorations.pathreplacing}
\usetikzlibrary{arrows,shapes,positioning}
\usetikzlibrary{decorations.markings,quotes,arrows.meta}
\usepackage[final]{microtype}
\usepackage[numbers]{natbib}
\usepackage{cmtiup}
\usepackage{graphicx}
\usepackage{caption}
\usepackage{subcaption}
\usepackage{verbatim}
\usepackage{array}
\usepackage[frame,cmtip,arrow,matrix,line,graph,curve]{xy}
\usepackage{graphpap,pstricks}
\usepackage{pifont}
\usepackage{todonotes}
\usetikzlibrary{topaths,calc} 
\tikzstyle{vertex} = [fill,shape=circle,node distance=80pt]
\tikzstyle{edge} = [opacity=0.4,fill opacity=0.0,line cap=round, line join=round, line width=40pt]
\tikzstyle{elabel} =  [fill,shape=circle,node distance=30pt]
\tikzstyle{circ} = [draw, fill=SpringGreen, circle, inner sep=0.12cm]
\tikzstyle{square} = [draw, fill=RedOrange, rectangle, inner sep=0.15cm]
\tikzstyle{triangle} = [draw, fill=Turquoise, regular polygon, regular polygon sides=3, inner sep=0.08cm]

\usepackage[margin = 2.3cm,foot=1cm]{geometry}

\pagestyle{plain}
\linespread{1.2}
\setlength{\parskip}{3pt}
\allowdisplaybreaks

\theoremstyle{plain}
\newtheorem{theorem}{Theorem}[section]		
\newtheorem{lemma}[theorem]{Lemma}
\newtheorem{claim}[theorem]{Claim}
\newtheorem{proposition}[theorem]{Proposition}

\newtheorem{conjecture}[theorem]{Conjecture}

\newtheorem{definition}[theorem]{Definition}

\theoremstyle{remark}

\crefname{theorem}{theorem}{theorems}
\Crefname{theorem}{Theorem}{Theorems}

\crefname{lemma}{lemma}{lemmas}
\Crefname{lemma}{Lemma}{Lemmas}

\crefname{claim}{claim}{claims}
\Crefname{claim}{Claim}{Claims}

\crefname{proposition}{proposition}{propositions}
\Crefname{proposition}{Proposition}{Propositions}

\crefname{corollary}{corollary}{corollaries}
\Crefname{corollary}{Corollary}{Corollaries}

\crefname{conjecture}{conjecture}{conjectures}
\Crefname{conjecture}{Conjecture}{Conjectures}

\crefname{problem}{problem}{problems}
\Crefname{problem}{Problem}{Problems}

\crefname{observation}{observation}{observations}
\Crefname{observation}{Observation}{Observations}

\crefname{example}{example}{examples}
\Crefname{example}{Example}{Examples}

\crefname{question}{question}{questions}
\Crefname{question}{Question}{Questions}

\crefname{definition}{definition}{definitions}
\Crefname{definition}{Definition}{Definitions}

\crefname{construction}{construction}{constructions}
\Crefname{construction}{Construction}{Constructions}

\renewcommand{\emptyset}{\varnothing}
\renewcommand{\le}{\leqslant}
\renewcommand{\leq}{\leqslant}
\renewcommand{\ge}{\geqslant}
\renewcommand{\geq}{\geqslant}


\let\originalleft\left
\let\originalright\right
\renewcommand{\left}{\mathopen{}\mathclose\bgroup\originalleft}
\renewcommand{\right}{\aftergroup\egroup\originalright}

\makeatletter
\def\imod#1{\allowbreak\mkern10mu({\operator@font mod}\,\,#1)}
\makeatother

\author{
 Nemanja Dragani\'c \and Ant\'onio Gir\~ao }
\thanks{
AG: Department of Mathematics, University College London, London, WC1E 6BT, UK.
E-mail: \texttt{a.girao@ucl.ac.uk}\\
ND: Mathematical Institute, University of Oxford, Oxford OX2 6GG, UK. 
E-mail: \texttt{nemanja.draganic\allowbreak@maths.ox.ac.uk}
}

\begin{document}

\title{Cycles with almost linearly many chords}

\begin{abstract}
We prove that constant minimum degree already forces cycles with almost linearly many chords. Specifically, every graph $G$ with $\delta(G)\ge C$ contains a cycle of length $\ell\ge 4$ with $\Omega(\ell/\log^{C}\ell)$ chords for some absolute constant $C>0$. This is the first result showing that a constant-degree condition yields an unbounded---indeed nearly linear---number of chords, placing our bound within a polylogarithmic factor of the Chen--Erd\H{o}s--Staton conjecture. It also gives a strong affirmative conclusion in the direction of a recent question of Dvo\v{r}\'ak, Martins, Thomass\'e, and Trotignon asking whether constant-degree graphs must contain cycles whose chord counts grow with their length. 
\end{abstract}

\maketitle

\section{Introduction}
A central theme in extremal graph theory is understanding how many edges a graph on $n$ vertices must have in order to force the appearance of particular substructures. In the sparse regime, especially when the average degree is constant or quasi-constant, remarkable progress has been achieved over the past decade. A unifying principle behind many of these developments is that of \emph{robust sublinear expansion}: one typically extracts from the original graph a mildly expanding subgraph (neighborhoods of vertex sets grow by a sublinear factor) and then exploits this expansion to find structure.

 Liu and Montgomery~\cite{LiuMontgomery2023,liu2023solution} proved that sufficiently large constant average degree forces a cycle whose length is a power of two, resolving a 1984 conjecture of Erd\H{o}s~\cite{eirdos1984some}; in the same work they also settled the Odd-cycle problem of Erd\H{o}s and Hajnal~\cite{erdos1981problems} and established the existence of large clique subdivisions where each edge is subdivided the same number of times, answering a question of Thomassen~\cite{thomassen1984subdivisions}. Further results of Fern\'andez, Kim, Kim, and Liu~\cite{fernandez2022nested} showed that every constant-average-degree graph contains two nested, edge-disjoint cycles preserving cyclic order, resolving another problem of Erd\H{o}s~\cite{erdHos1975problems}. Subsequent work demonstrated the existence of \emph{pillars}—two vertex-disjoint cycles of equal length joined by vertex-disjoint paths of the same length—again under the same degree assumptions~\cite{fernandez2023build}. 

Other classical problems require more than linearly many edges. A question of Erd\H{o}s~\cite{erdHos1975problems} asks for the average degree needed to force two edge-disjoint cycles on the same vertex set. A result of Pyber, Rödl, and Szemerédi~\cite{pyber1995dense} implies that guaranteeing any $4$-regular subgraph already requires $\Omega(n\log\log n)$ edges, while Chakraborti, Janzer, Methuku, and Montgomery~\cite{chakraborti2025edge} recently showed that for the Erd\H{o}s question, an average degree of $(\log n)^C$ suffices, for some large constant $C>0$.

A 1996 result of Chen, Erd\H{o}s, and Staton~\cite{chen1996proof}, resolving a problem of Bollob\'as, states that for every $k\in\mathbb{N}$ there exists a constant $c_k$ such that any graph with average degree at least $c_k$ contains cycles $C_1,\ldots,C_k$ where each $C_{i+1}$ is formed entirely by chords of $C_i$. In a related direction, Thomassen proved that for every $k$ there exists $g_k$ such that any graph with minimum degree~$3$ and girth at least $g_k$ contains a cycle with at least $k$ chords. Motivated by these results, Chen, Erd\H{o}s, and Staton asked in 1996 how many edges are required in an $n$-vertex graph to guarantee a cycle with as many chords as vertices. The best current bound, due to the first author together with Methuku, Munh\'a Correia, and Sudakov~\cite{draganic2024cycles}, shows that average degree at most $(\log n)^{8}$ already suffices; intriguingly, even the relaxed version of seeking a cycle of length $\ell$ with a linear number of chords, say $\varepsilon \ell$, is still wide open.

In this direction, Dvo\v{r}ák, Martins, Thomass\'e, and Trotignon~\cite{dvovrak2025lollipops} further relaxed the condition on the number of edges, asking whether there exists a function $f\colon \mathbb{N}\to\mathbb{N}$ with $f(\ell)\to\infty$ as $\ell\to\infty$ such that every graph with minimum degree~$3$ contains a cycle on $\ell$ vertices with $f(\ell)$ chords.
Assuming only a constant average degree, we answer this question in a strong form, while simultaneously coming close to resolving the Chen–Erd\H{o}s–Staton conjecture, providing a lower bound on the number of chords which is optimal up to a logarithmic factor.

\begin{theorem}\label{thm:main}
There exists constants $c,C>0$ such  that 
every graph $G$ with $\delta(G)\ge C$ contains a cycle of length $\ell$ with at least $\Omega\left(\frac{\ell}{\log^{c}\ell}\right)$ chords, for some positive integer $\ell$.
\end{theorem}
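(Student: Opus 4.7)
The plan is to combine the Komlós--Szemerédi sublinear-expansion framework with a structural argument that produces a long cycle whose vertex set accounts for enough of the degree budget to force many chords, adapting the chord-extraction techniques of Draganić--Methuku--Correia--Sudakov \cite{draganic2024cycles} down to the constant-degree regime.

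\textbf{Reduction and cycle construction.} By a standard Komlós--Szemerédi reduction, we pass to a sublinear-expanding subgraph $H\subseteq G$ satisfying $|N_H(S)\setminus S|\geq |S|/\log^{a}|S|$ for every $S$ in a useful size range and some fixed $a>0$, while preserving $\delta(H)\geq C_1$ for a large constant $C_1$ depending on $C$. Using the expansion together with a Pósa-rotation/absorption procedure, we build a cycle $C$ in $H$ of length $\ell$ whose vertex set is as ``closed'' as possible---formally, whose external boundary satisfies $e_H(V(C),V(H)\setminus V(C))\leq (C_1-2)\ell - 2\ell/\log^{c}\ell$. The construction starts from a moderately long cycle (guaranteed by the standard Pósa argument in expanders) and then, whenever the external boundary is still too large, identifies a stray outside vertex reachable by a short detour (guaranteed by expansion) and absorbs it into the cycle, strictly reducing the count of outward edges.

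\textbf{Chord counting.} From the degree identity
\[
\sum_{v\in V(C)}\deg_H(v)\;=\;2\ell\;+\;2\cdot\mathrm{chords}(C)\;+\;e_H(V(C),V(H)\setminus V(C)),
\]
combined with $\sum_v\deg_H(v)\geq C_1\ell$ and the external-edge bound from the construction, we directly obtain $\mathrm{chords}(C)\geq \ell/\log^{c}\ell$, which yields the theorem.

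\textbf{Main obstacle.} The central difficulty lies in the cycle construction: one must produce a cycle whose vertex set has a meaningfully \emph{upper}-bounded external boundary inside a sublinear expander, where the expansion property only gives the wrong-direction \emph{lower} bound on boundaries. The resolution must therefore come from the construction itself rather than from expansion, via an absorption argument executed with quantitative precision---each absorption step trades an outward edge for progress along a short rerouting path, and pigeonhole over path lengths combined with the expansion rate $\varepsilon(x)\sim 1/\log^{a}x$ accumulate to give the final polylogarithmic factor $\log^{c}\ell$. A second subtlety is that in the constant-degree regime the typical cycle has very few chords (since $\mathbb{E}[\mathrm{chords}]\sim C_1\ell^2/n$ for a random $V(C)$ of size $\ell$), so the argument must specifically locate a region of $H$ where the cycle can capture a polylog fraction of the available degree budget as chord edges rather than external edges; handling this precisely is where the bulk of the work lies, and it is why the bound degrades from the $\Omega(\ell)$ of \cite{draganic2024cycles} to $\Omega(\ell/\log^{c}\ell)$ in our setting.
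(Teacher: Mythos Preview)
Your degree identity is correct, but the proposal is essentially circular: asking that $e_H(V(C),V(H)\setminus V(C))\le (C_1-2)\ell-2\ell/\log^{c}\ell$ is \emph{literally equivalent} to asking that $C$ have at least $\ell/\log^{c}\ell$ chords. So the entire content of the theorem has been pushed into the ``cycle construction'' step, and the sketch you give there does not work. Concretely, in a $d$-regular graph, absorbing an outside vertex $v$ into the cycle via a detour $xvy$ removes the external edges from $v$ to $V(C)$ (say $k$ of them) and creates $d-k$ new external edges from $v$; the net change in the external count is $d-2k$. In a good expander with $\ell\ll n$ the typical absorbable vertex has $k=2$, so each absorption \emph{increases} the external edge count by $d-4$ rather than decreasing it. There is no mechanism in your outline by which absorption systematically creates chords, and the phrases ``pigeonhole over path lengths'' and ``trades an outward edge for progress'' do not name one. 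You correctly flag this as the main obstacle, but the resolution you describe is not a resolution.

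The paper's argument is entirely different in shape. After passing to a $C_4$-free sublinear expander (the $C_4$-freeness, via K\"uhn--Osthus, is used crucially and is absent from your outline), it does \emph{not} try to bound the boundary of a single cycle. Instead it builds many vertex-disjoint \emph{gadgets}---either ``nice spiders'' (short spiders with three leaves of high degree) or ``cycle extenders'' (a short chorded cycle with two pendant paths into large low-diameter sets)---each of which is engineered to contribute exactly one chord when the gadgets are strung together by short connecting paths through the expander. If $s$ disjoint gadgets exist, one obtains a cycle of length $O(s\cdot\mathrm{polylog}\,n)$ with $s$ chords. The hard part is a structural dichotomy showing that if one \emph{cannot} find $2^{\log^{1/100}n}$ disjoint gadgets, then after stripping out high-degree vertices and the gadgets already found, what remains contradicts sublinear expansion. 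This gadget-collection-plus-contradiction scheme is the actual engine of the proof; nothing resembling a boundary-reducing absorption appears.
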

 
\section{Preliminaries}
\paragraph{\textbf{Notation.}} 
We follow standard graph-theoretic conventions. For a graph $G$, we write $V(G)$ and $E(G)$
for its vertex and edge sets, $d(G)$ for its average degree, and $\delta(G)$ and $\Delta(G)$
for its minimum and maximum degrees. For a vertex set $X \subseteq V(G)$, we denote by
$N_G(X)$ the set of all vertices of $G$ outside of $X$ adjacent to at least one vertex of $X$; when the
underlying graph is clear, we simply write $N(X)$. A \emph{spider} is a tree formed by a
collection of internally vertex-disjoint paths that all share a common endpoint $v$, called
the \emph{center}, and are otherwise disjoint. All logarithms are base~2 unless otherwise specified.

We will need the well known decomposition of connected graphs. 

\begin{proposition}[Block--cut structure]
Every connected graph \(G\) admits a unique decomposition into \emph{blocks}, that is, maximal \(2\)-connected subgraphs (and bridges). 
Distinct blocks intersect in at most one vertex, and the incidence structure between blocks and cut-vertices forms a tree, called the \emph{block--cut tree} of \(G\).
\end{proposition}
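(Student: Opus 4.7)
The plan is to establish the block structure via an equivalence relation on the edges. Define $\sim$ on $E(G)$ by $e \sim f$ if and only if $e = f$ or some cycle of $G$ contains both $e$ and $f$. Reflexivity and symmetry are immediate, so the main work is transitivity: given cycles $C_1$ through $e, f$ and $C_2$ through $f, g$, I need to produce a cycle through $e$ and $g$. I would argue by cycle surgery: in the simplest case where $E(C_1) \cap E(C_2) = \{f\}$, splitting each cycle at $f$ and recombining the two halves that contain $e$ and $g$ respectively yields the desired cycle; the general case reduces to this by induction on $|E(C_1) \cap E(C_2)|$, exchanging a pair of subpaths at each step to decrease the overlap. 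This transitivity argument is the main technical obstacle; the remaining steps are straightforward consequences.

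Once $\sim$ is an equivalence relation, I would define each block to be the subgraph spanned by a single $\sim$-class of edges, with bridges and isolated vertices each forming their own block. Each block is $2$-connected by construction, since any two of its edges (and hence any two of its vertices) lie on a common cycle. I would then verify that two distinct blocks $B_1, B_2$ intersect in at most one vertex: if they shared two vertices $u, v$, then internally disjoint $u$-$v$ paths inside each $B_i$ could be concatenated into a cycle using edges of both $B_1$ and $B_2$, forcing the two equivalence classes to coincide and contradicting $B_1 \neq B_2$.

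Finally I would build the block--cut graph $T$ as the bipartite graph on blocks and cut-vertices, with incidence given by containment. Connectedness of $T$ is inherited from $G$: any path between two vertices of $G$ traverses a sequence of blocks linked consecutively by shared cut-vertices. For acyclicity, a cycle $B_1 - v_1 - B_2 - v_2 - \cdots - B_k - v_k - B_1$ in $T$ would let us stitch together internal $v_i$-$v_{i+1}$ paths inside each $B_i$ into a single cycle of $G$ using edges from several distinct blocks, which (by transitivity of $\sim$) forces those blocks to coincide, a contradiction. Uniqueness of the decomposition is then automatic, as blocks correspond to the canonical $\sim$-classes. I expect the transitivity step to be the only delicate part; everything else reduces to unwinding definitions.
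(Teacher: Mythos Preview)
The paper does not actually prove this proposition: it is quoted as a well-known structural fact (``We will need the well known decomposition of connected graphs'') and left without argument. So there is nothing in the paper to compare against, and your plan is simply being judged on its own merits.

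Your approach via the equivalence relation $e \sim f$ on edges is the standard textbook route and is sound in outline. One point in the transitivity step deserves more care, though. In what you call the simplest case, $E(C_1)\cap E(C_2)=\{f\}$, deleting $f$ from each cycle gives two paths between the endpoints of $f$, but these paths need not be internally \emph{vertex}-disjoint, so gluing them does not automatically produce a cycle. The clean fix is to argue at the vertex level: starting from each endpoint of $g$, walk along $C_2$ toward $f$ until you first meet $V(C_1)$, obtaining two internally disjoint subpaths of $C_2$ landing at distinct vertices of $C_1$; then splice these onto the arc of $C_1$ that contains $e$. This replaces the induction on $|E(C_1)\cap E(C_2)|$ by a single direct construction. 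With that adjustment, the rest of your plan (maximality and $2$-connectedness of blocks, at-most-one-vertex intersection, connectedness and acyclicity of the block--cut graph) goes through exactly as you describe.
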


We will also use the following result by Kuhn and Osthus~\cite{kuhn2004every}, which guarantees a $C_4$-free subgraph of large average degree in graphs with constant average degree. The bound on the required initial degree was later refined by Montgomery, Pokrovskiy, and Sudakov\cite{montgomery2021c}.
\begin{theorem}\label{thm:c4free}
    For every $k>0$ there is a $d>0$, such that every graph of average degree at least $d$ contains a subgraph of average degree $k$ which is $C_4$-free.
\end{theorem}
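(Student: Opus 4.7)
My plan is to prove \cref{thm:c4free} by combining a regularization of the host graph with a probabilistic sparsification followed by deletion of remaining $C_4$s. The three main steps are: (i) pass to a bipartite subgraph of nearly uniform degree; (ii) bound the number of $C_4$s in terms of the average and maximum degree; (iii) sparsify and delete.

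\textbf{Step 1 (Regularization).} Starting from $G$ with $d(G)\ge d$, I first apply the standard iterative deletion of low-degree vertices to obtain $G_1\subseteq G$ with $\delta(G_1)\ge d/2$. Taking a bipartite subgraph containing at least half the edges and repeating the low-degree pruning gives a bipartite graph of minimum degree $\Omega(d)$. Finally I restrict to a dyadic degree range so that every vertex on each side has degree in some window $[D,2D]$ with $D=\Theta(d)$. A crude dyadic partition loses a $\log|V(G)|$ factor; since the statement requires $d=d(k)$ to be independent of $|V(G)|$, this loss has to be eliminated by a more careful treatment of the heavy-tailed portion of the degree sequence, essentially as in~\cite{kuhn2004every} and the refinement~\cite{montgomery2021c}.

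\textbf{Step 2 ($C_4$ counting).} In the bipartite subgraph $H$ with parts $A,B$ produced in Step~1, write $c(u,v)=|N(u)\cap N(v)|$ for $u,v\in A$. Then $\#C_4(H)=\sum_{\{u,v\}\subset A}\binom{c(u,v)}{2}$, and the cherry count gives
\[
\sum_{\{u,v\}\subset A} c(u,v) \;=\; \sum_{b\in B}\binom{d(b)}{2} \;=\; O\bigl(|B|D^2\bigr).
\]
Since $c(u,v)\le\Delta(H)=O(D)$, we obtain $\binom{c(u,v)}{2}\le O(D)\cdot c(u,v)$ and therefore $\#C_4(H)=O(|A|\,D^3)$.

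\textbf{Step 3 (Sparsification and deletion).} Retain each edge of $H$ independently with probability $q=c\,D^{-2/3}$ for a sufficiently small absolute constant $c>0$, obtaining a random subgraph $H_q$. Then
\[
\mathbb{E}\bigl[|E(H_q)|\bigr] \;=\; q\cdot|E(H)| \;=\; \Omega\bigl(|A|\,D^{1/3}\bigr),\qquad
\mathbb{E}\bigl[\#C_4(H_q)\bigr] \;=\; q^4\cdot\#C_4(H) \;=\; O\bigl(c^4|A|\,D^{1/3}\bigr).
\]
Choosing $c$ small makes $\mathbb{E}[\#C_4(H_q)]$ much smaller than $\mathbb{E}[|E(H_q)|]$; by Chernoff-type concentration both quantities are within a constant factor of their means with positive probability. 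Deleting one edge per remaining $C_4$ yields a $C_4$-free subgraph of average degree $\Omega(D^{1/3})=\Omega(d^{1/3})$, which exceeds any prescribed $k$ provided $d\gtrsim k^{3}$.

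\textbf{Main obstacle.} The heart of the argument is the regularization in Step~1. Once one has a bipartite subgraph with maximum degree $O(d)$ and average degree $\Theta(d)$, the $C_4$-counting and random deletion steps are essentially routine. Extracting such a subgraph with no loss depending on $|V(G)|$---so that the final $d$ depends only on $k$---is the substantive difficulty, and is precisely the content addressed by~\cite{kuhn2004every,montgomery2021c}.
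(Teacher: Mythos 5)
The paper does not prove \Cref{thm:c4free} at all: it is quoted as a black box from K\"uhn--Osthus~\cite{kuhn2004every} (with the quantitative refinement in~\cite{montgomery2021c}). So your write-up has to stand on its own as a proof, and it does not: the gap is exactly where you locate it, in Step~1. Iterated low-degree deletion and passing to a bipartite subgraph are fine, but restricting to a dyadic degree window does not produce a subgraph with $D=\Theta(d)$ and average degree $\Theta(d)$; choosing the best dyadic class costs a factor of $\log|V(G)|$ in average degree, which is fatal here because $d$ is a constant depending only on $k$ while $n$ is unbounded (for constant $d$ the surviving average degree $d/\log n$ tends to $0$). Writing that this loss ``has to be eliminated \ldots essentially as in~\cite{kuhn2004every,montgomery2021c}'' makes the argument circular: you are invoking the source of the very theorem you are asked to prove, and moreover those papers do not proceed by first extracting an almost-regular subgraph --- finding a subgraph of a constant-average-degree graph with maximum degree comparable to its average degree and average degree tending to infinity with $d$ is itself a serious problem (in the spirit of the Erd\H{o}s--Simonovits almost-regular subgraph question, and of the Pyber--R\"odl--Szemer\'edi-type obstructions~\cite{pyber1995dense} showing how delicate degree regularization is in the sparse regime). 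So the ``heart'' you defer is precisely the content of the theorem, and the proposal does not constitute a proof.

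For completeness: Steps~2 and~3 are essentially correct for an almost-regular bipartite graph. The $C_4$ count $O(|A|D^3)$ is right, and sparsifying with $q=cD^{-2/3}$ followed by deleting an edge from each surviving $C_4$ gives a $C_4$-free subgraph of average degree $\Omega(D^{1/3})$. One small repair: you cannot invoke Chernoff for the number of $C_4$'s (the indicators are far from independent); but no concentration is needed, since bounding $\mathbb{E}\bigl[|E(H_q)|-\#C_4(H_q)\bigr]$ from below and taking an outcome achieving it already suffices for the deletion step. Thus the routine part of your argument is fine, and the missing regularization step is the entire difficulty.
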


\begin{definition}
    A graph $G$ is an $\alpha$-expander if every subset $S$ of vertices of size at most $|G|/2$ has $|N(S)|\geq\alpha|S|$.
\end{definition}
We will also need the following result of Friedman and Krivelevich~\cite{friedman2021cycle}.
\begin{theorem}\label{thm: long cycle}
    Every $n$-vertex $\alpha$-expander contains a cycle of length at least $\Omega\left(\frac{\alpha^3}{\log(1/\alpha)}\right) n$.
\end{theorem}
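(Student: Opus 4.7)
The plan is to reduce $G$ to a highly structured subgraph and then construct the desired cycle inside it, controlling the chord count throughout the construction.

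\textbf{Step 1: reduce to a $C_4$-free sublinear expander.} Starting from $\delta(G) \ge C$ for a large absolute constant $C$, apply \Cref{thm:c4free} to extract a $C_4$-free subgraph $H$ of large constant average degree. Pass to a subgraph $H_0 \subseteq H$ of large constant minimum degree $k$ by a standard averaging argument. Using the Komlós--Szemerédi sublinear expander extraction, further pass to a subgraph $H' \subseteq H_0$ that is a sublinear expander in the KS sense: for an appropriate $\eps(x) = \Theta(1/\log^{c_0} x)$, every not-too-large set $S$ satisfies $|N(S)| \ge \eps(|S|)\,|S|$. This extraction preserves $C_4$-freeness and minimum degree up to constant factors.

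\textbf{Step 2: build a cycle together with its chords.} The heart of the proof is constructing a cycle $C$ in $H'$ of some length $\ell$ along with $\Omega(\ell/\log^c \ell)$ chords. I would do this via a rotation-extension argument adapted to track cycle length and chord count simultaneously. Sketch: start with a long path $P$ produced by Pósa rotations in $H'$, and at each rotation step record edges between $V(P)$ and $V(P)$ that will become candidate chords once the path is closed. Expansion lets the path be grown while losing at most a polylog fraction of candidate chords per step, so that after enough rotations one obtains a path, and then a cycle (closed via a short expansion-based detour), whose induced subgraph in $H'$ contains $\Omega(\ell/\log^c \ell)$ edges beyond the cycle edges themselves.

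\textbf{Step 3: chord lower bound.} Every vertex of $V(C)$ has degree $\ge k$ in $H'$, so the total incidence count from $V(C)$ is $\ge k\ell$; the cycle itself accounts for $2\ell$ of these incidences, and the rest are either chord-endpoints or edges leaving $V(C)$. Combining the $C_4$-free condition (which keeps common neighborhoods from clustering) with the sublinear expansion estimate (which bounds the number of edges escaping a large set by a polylog factor), one shows that at most a $1/\log^{c_1}\ell$-fraction of the $k\ell$ incidences leave $V(C)$. The remaining $\Omega(k\ell/\log^{c_1}\ell)$ incidences come from chord edges, yielding $\Omega(\ell/\log^{c}\ell)$ chords.

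\textbf{Main obstacle.} The hard step is Step 2: simultaneously building a long cycle and forcing its vertex set to induce a dense subgraph. Standard sublinear-expansion methods produce a cycle of length $\ell = \Omega(|V(H')|/\log^{O(1)} |V(H')|)$ (as in \Cref{thm: long cycle} applied to a constant-expansion sub-expander), but there is no a priori reason that the interior of this cycle should inherit the density of $H'$---the expansion used to grow the cycle could naturally push edges \emph{outside} $V(C)$. Resolving this requires using $C_4$-freeness to prevent the short connecting paths built during Pósa rotations from colliding on a small vertex set, so that candidate chords recorded at intermediate stages persist until the end of the construction. Formalizing this non-collision and accounting carefully for the polylogarithmic losses incurred at every rotation step is where essentially all of the technical work lies, and is what forces the $\log^c \ell$ gap from a fully linear chord count.
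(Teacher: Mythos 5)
Your proposal does not prove the statement at hand. The statement is the Friedman--Krivelevich theorem (\Cref{thm: long cycle}): every $n$-vertex $\alpha$-expander in the classical, linear sense ($|N(S)|\ge\alpha|S|$ for every $S$ with $|S|\le n/2$) contains a cycle of length $\Omega\bigl(\tfrac{\alpha^3}{\log(1/\alpha)}\bigr)n$. What you sketch instead is an outline of the paper's main result (\Cref{thm:main}): extracting a $C_4$-free sublinear expander from a graph of constant minimum degree and building a long cycle with $\Omega(\ell/\log^c\ell)$ chords. None of that machinery is relevant to the statement: \Cref{thm: long cycle} has no minimum-degree or $C_4$-freeness hypothesis, asks for no chords, and requires an explicit quantitative dependence on $\alpha$ (the $\alpha^3/\log(1/\alpha)$ factor), which your sketch never addresses. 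Worse, in Step 2 you invoke \Cref{thm: long cycle} itself (``as in \Cref{thm: long cycle} applied to a constant-expansion sub-expander''), so read as a proof of that theorem the argument is circular.

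For comparison, the paper does not prove this statement either: it is quoted as a black box from Friedman and Krivelevich \cite{friedman2021cycle}. Their argument is of a genuinely different nature --- one works directly with the linear vertex-expansion hypothesis (via a DFS/rotation--extension type analysis) to force a long path and then close a long cycle, tracking the polynomial-in-$\alpha$ losses explicitly to arrive at the $\alpha^3/\log(1/\alpha)$ bound. If you intend to supply a proof of this theorem, you should start from that expansion property alone, not from the sublinear-expander and chord-counting framework of the main theorem.
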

\subsection{Sublinear expanders}

We begin with the standard definition of sublinear expanders.
\begin{definition}[Sublinear expander]\label{def:sublinear-expander}
Let $\varepsilon_1 > 0$ and $k \in \mathbb{N}$. A graph $G$ is an $(\varepsilon_1, k)$\emph{-expander} 
if for all $X \subset V(G)$ with $k/2 \le |X| \le |G|/2$, and any subgraph $F \subseteq G$ 
with $e(F) \le d(G)\,\varepsilon(|X|)\,|X|$, we have
\[
|N_{G \setminus F}(X)| \ge \varepsilon(|X|)\,|X|,
\]
where
\[
\varepsilon(x) = \varepsilon(x, \varepsilon_1, k) =
\begin{cases}
0, & \text{if } x < k/5,\\[4pt]
\displaystyle \frac{\varepsilon_1}{\log^2 (15x/k)}, & \text{if } x \ge k/5.
\end{cases}
\]
\end{definition}
For our purposes, the subgraph $F$ from the definition will always be chosen to be empty, as our proof does not require robustness of expansion.
We now state the classical result of Komlós and Szemerédi~\cite{komlos1996topological}, which ensures the existence of a robust sublinear expander as a subgraph, with only a small loss in average degree, and with a bound on the minimum degree.

\begin{theorem}\label{thm:expander-subgraph}
There exists some $\varepsilon_1 > 0$ such that the following holds for every $k > 0$.
Every graph $G$ has an $(\varepsilon_1, k)$-expander subgraph $H$ with
\[
d(H) \ge \frac{d(G)}{2}
\quad \text{and} \quad
\delta(H) \ge \frac{d(H)}{2}.
\]
\end{theorem}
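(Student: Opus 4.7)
The plan is to pass to a well-structured host subgraph and then exhibit the desired cycle via a ball-growing argument carried out inside a region where edges concentrate.

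First, we combine \Cref{thm:c4free} and \Cref{thm:expander-subgraph}: starting from $G$, we extract a $C_4$-free subgraph of large constant average degree, and then a further $(\varepsilon_1,k)$-sublinear-expander subgraph $H$ with $\delta(H)\ge d_0$ for a sufficiently large constant $d_0$. All of this is arranged by choosing the global constant $C$ in the theorem large enough. From now on we work inside $H$, simultaneously exploiting its $C_4$-freeness, its sublinear expansion, and its large constant minimum degree.

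The main construction proceeds by locating a dense core: a vertex $v$, a BFS ball $B_r(v)$, and a carefully chosen BFS layer $L\subseteq B_r(v)$. Iterating the expansion estimate $|B_{r+1}|\ge(1+\varepsilon_1/\log^2|B_r|)\,|B_r|$ lets us control $|B_r|$ up to any target size $\le|H|/2$ using $r=O(\log^{O(1)}|H|)$ steps. Next, we build a long path $P$ of length $\ell=\Omega(|H|/\log^{O(1)}|H|)$ that mostly lies inside $L$, using sublinear expansion to reroute whenever the naive greedy extension gets stuck. The fact that $P$ is confined to the dense region forces many of the $d_0-2$ ``extra'' neighbors of each vertex of $P$ to also lie on $P$: by pigeonholing over pairs of BFS layers we arrange for $L$ to carry $\Omega(d_0|L|/\log^{O(1)}|H|)$ internal edges, of which a constant fraction have both endpoints on $P$ and are not path edges, giving $\Omega(\ell/\log^{O(1)}\ell)$ candidate chords. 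Closing $P$ into a cycle via a short expansion-path of length $O(\log^{O(1)}|H|)$ between its endpoints changes $\ell$ only negligibly and preserves all accumulated chords.

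The main obstacle is guaranteeing that the cycle really does concentrate chord-forming edges. A ``generic'' long cycle of length $\ell=o(|H|)$ in a constant-degree host picks up only $O(d_0\ell^2/|H|)=o(\ell)$ induced edges, hence essentially no chords after subtracting the $\ell$ cycle edges. The remedy is to steer the cycle into a pre-identified dense region, here the layer $L$: its existence with the requisite edge density follows from a pigeonhole argument on the $\Omega(d_0|H|)$ non-tree edges, which are distributed over only $O(\log^{O(1)}|H|)$ distinct layer pairs. The $C_4$-freeness is crucial for preventing multiplicities from collapsing these counted edges onto a few vertices, while the sublinear expansion provides the connectivity needed to route $P$ densely through $L$ and to close the path into a cycle. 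Carefully tracking the polylogarithmic losses at each step yields the exponent $c$ in the final bound.
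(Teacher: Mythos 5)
Your proposal does not prove the assigned statement at all. The statement to be proved is \Cref{thm:expander-subgraph}, the Koml\'os--Szemer\'edi theorem that \emph{every} graph $G$ contains an $(\varepsilon_1,k)$-expander subgraph $H$ with $d(H)\ge d(G)/2$ and $\delta(H)\ge d(H)/2$. Your text instead sketches an argument for the main result of the paper (\Cref{thm:main}, cycles with many chords), and in its very first paragraph it \emph{invokes} \Cref{thm:expander-subgraph} as a black box to pass to the expander subgraph $H$. With respect to the assigned statement this is circular: the thing you are asked to establish is assumed, and everything that follows (BFS balls, a dense layer $L$, a long path rerouted by expansion, closing it into a chorded cycle) is aimed at a different theorem. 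Note also that the target statement has nothing to do with cycles, chords, $C_4$-freeness, or minimum-degree hypotheses on $G$; it is a purely structural extraction result valid for all graphs.

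What a proof actually requires is the extremal/potential-function argument of Koml\'os and Szemer\'edi (the paper itself does not reprove it but cites \cite{komlos1996topological}): one considers a subgraph $H\subseteq G$ maximizing a functional of the form $d(H)\bigl(1+\tfrac{\varepsilon_2}{\log(15|H|/k)}\bigr)$ (or $d(H)/\lambda(|H|)$ for a suitable slowly varying weight $\lambda$), and shows that if $H$ failed to be an $(\varepsilon_1,k)$-expander, then a violating set $X$ (with small robust neighbourhood) would yield a subgraph on $X\cup N(X)$, or on its complement, whose density gain beats the loss in the weight, contradicting maximality; the choice of $\varepsilon(x)=\varepsilon_1/\log^2(15x/k)$ is exactly what makes the accumulated losses summable, and the conditions $d(H)\ge d(G)/2$ and $\delta(H)\ge d(H)/2$ come out of the same optimization together with the standard observation that one may insist on minimum degree at least half the average degree. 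None of these ingredients appears in your proposal, so as a proof of \Cref{thm:expander-subgraph} it is vacuous. (Separately, even read as a sketch of \Cref{thm:main}, it diverges from the paper's route, which builds vertex-disjoint gadgets---nice spiders and cycle extenders---and links them by short paths, rather than concentrating a long path in a dense BFS layer; your claim that one can force $\Omega(d_0|L|/\log^{O(1)}|H|)$ internal edges into a single layer pair and then route a path collecting a constant fraction of them as chords is unsupported, but that is beside the main point.)
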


The following result shows that there is a short path between two sets that avoids another small set; the proof follows from a simple greedy exploration of the graph, so we omit it. 
\begin{lemma}\label{lem:connecting-path}
    Let $G$ be a $\alpha$-expander for some $\alpha>0$. Let $X,Y,B$ be disjoint sets. If $|X|,|Y|>2|B|/\alpha$, then there is a path between $X$ and $Y$ which avoids $B$ and has length at most $2\log_{1+\alpha/2}n$.
\end{lemma}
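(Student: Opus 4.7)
My plan is to perform a simultaneous breadth-first exploration from $X$ and from $Y$ in which we simply skip over the forbidden set $B$, and then check that each BFS front grows by a factor of $1+\alpha/2$ per step until the two fronts collide.

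Concretely, set $S_0 := X$ and $S_{i+1} := S_i \cup (N(S_i)\setminus B)$, and analogously $T_0 := Y$ with $T_{i+1} := T_i \cup (N(T_i)\setminus B)$. Every vertex of $S_i$ is joined to $X$ by a path in $G\setminus B$ of length at most $i$, and the same holds for $T_i$ and $Y$. The core calculation is the growth rate: as long as $|S_i|\le n/2$, the $\alpha$-expansion of $G$ gives $|N(S_i)|\ge \alpha|S_i|$, so
\[
|S_{i+1}| \;\ge\; (1+\alpha)|S_i| - |B|.
\]
Because $(S_i)_{i\ge 0}$ is nondecreasing and $|S_0|=|X|\ge 2|B|/\alpha$, the subtracted term $|B|$ is always at most $(\alpha/2)|S_i|$, which yields the clean recursion $|S_{i+1}|\ge (1+\alpha/2)|S_i|$. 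Iterating, $|S_i|\ge (1+\alpha/2)^i |X|$ for every $i$ with $|S_i|\le n/2$, and the identical estimate holds for $T_i$.

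Now set $L := \log_{1+\alpha/2} n$ and suppose, for contradiction, that $S_i\cap T_i = \emptyset$ for every $i\le L$. By the growth estimate above, there exist $i_X,\,i_Y\le L$ with $|S_{i_X}|>n/2$ and $|T_{i_Y}|>n/2$; but then $S_{i_X}$ and $T_{i_Y}$ are disjoint subsets of $V(G)$ of total size exceeding $n$, a contradiction. Hence some step $i\le L$ satisfies $S_i\cap T_i\neq\emptyset$; picking any common vertex $v$, concatenating the BFS paths from $X$ to $v$ and from $v$ to $Y$, and pruning the resulting walk to a simple path, produces the desired $X$--$Y$ path in $G\setminus B$ of length at most $2L=2\log_{1+\alpha/2}n$.

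I do not foresee a genuine obstacle here: the argument is a textbook double BFS, and the only delicate point is the bookkeeping around the loss incurred by deleting $B$ at each step. That loss is absorbed by the hypothesis $|X|,|Y|>2|B|/\alpha$, which is calibrated precisely so that the subtraction never destroys more than half of the expansion, preserving a clean geometric growth rate of $1+\alpha/2$.
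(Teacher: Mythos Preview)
Your proof is correct and is precisely the ``simple greedy exploration'' the paper alludes to (the paper omits the proof entirely). The only cosmetic point is a possible off-by-one in asserting $i_X,i_Y\le L$ rather than $\le\lceil L\rceil$, but this is immaterial for the stated bound and the argument is otherwise clean.
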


The next result shows that if a small subset of vertices of an expander is removed, we can remove a few more vertices to obtain a graph with similar expansion properties.

\begin{lemma}\label{lem:clean for expansion}
    Let $G$ be a $n$-vertex $\alpha$-expander for $0<\alpha<1/100$, and let $U$ be a subset of vertices of size $|U|\leq \alpha^2 n/100$. Then there is a subset of vertices $B$ of size at most $2|U|/\alpha$ with $|N_{G\setminus U}(B)|\leq |B|$, such that $G\setminus(U\cup B)$ is an $\alpha/2$-expander.
\end{lemma}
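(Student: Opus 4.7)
I would construct $B$ via an iterative absorption procedure. Starting from $B_0 := \emptyset$, while $G \setminus (U \cup B_i)$ fails to be an $\alpha/2$-expander, pick any witness $S_i \subseteq V(G) \setminus (U \cup B_i)$ with $|S_i| \le |V(G) \setminus (U\cup B_i)|/2$ and $|N_{G\setminus(U\cup B_i)}(S_i)| < (\alpha/2)|S_i|$, and set $B_{i+1} := B_i \cup S_i$. The procedure terminates because $B$ grows strictly, and by construction the final $B$ makes $G \setminus (U \cup B)$ an $\alpha/2$-expander.

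The heart of the argument is the invariant $|N_{G\setminus U}(B_i)| \le (\alpha/2)|B_i|$, which in particular delivers the bound $|N_{G\setminus U}(B)| \le |B|$ required by the lemma. It propagates inductively: any $v \in N_{G\setminus U}(B_{i+1})$ lies outside $U \cup B_i \cup S_i$ and has a neighbor in $B_i$ or in $S_i$, so it belongs to $(N_{G\setminus U}(B_i)\setminus S_i) \cup N_{G\setminus(U\cup B_i)}(S_i)$; thus $|N_{G\setminus U}(B_{i+1})| \le (\alpha/2)|B_i| + (\alpha/2)|S_i| = (\alpha/2)|B_{i+1}|$.

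The size bound $|B| \le 2|U|/\alpha$ then follows by applying the $\alpha$-expansion of $G$ directly to $B$ itself (legal provided $|B| \le n/2$). Since $N_G(B)$ is the disjoint union of $N_G(B)\cap U$ and $N_{G\setminus U}(B)$,
\[
\alpha|B| \le |N_G(B)| \le |U| + |N_{G\setminus U}(B)| \le |U| + (\alpha/2)|B|,
\]
so $|B| \le 2|U|/\alpha$.

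The one subtlety---and the place where I expect the main obstacle to lie---is ensuring $|B_i| \le n/2$ at every stage of the iteration, so that the expansion step above is valid. I would handle this by applying the $\alpha$-expansion of $G$ to $S_i$ itself, which is valid since $|S_i| \le n/2$: combined with $|N_{G\setminus(U\cup B_i)}(S_i)| < (\alpha/2)|S_i|$ this forces $(\alpha/2)|S_i| < |N_G(S_i) \cap (U \cup B_i)| \le |U| + |B_i|$, so $|S_i| < 2(|U|+|B_i|)/\alpha$. Together with the inductive hypothesis $|B_i| \le 2|U|/\alpha$ and the size assumption $|U| \le \alpha^2 n/100$, this keeps $|B_{i+1}|$ safely below $n/2$ and closes the induction. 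The substantive content of the proof is thus the invariant and the one-line application of expansion to $B$; the remainder is bookkeeping to legitimize the expansion step at each stage.
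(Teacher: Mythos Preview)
Your proof is correct and follows essentially the same approach as the paper: take $B$ to be a (maximal) union of sets violating $\alpha/2$-expansion in $G\setminus U$, maintain the key invariant $|N_{G\setminus U}(B)|\le(\alpha/2)|B|$, and deduce $|B|\le 2|U|/\alpha$ by applying the $\alpha$-expansion of $G$ to $B$. The paper defines $B$ in one shot as the \emph{largest} bad subset of size at most $n/2$ and then needs a short separate case when a witness $X$ has $|X\cup B|\ge |G'|/2$; your iterative construction, with the running hypothesis $|B_i|\le 2|U|/\alpha$ propagated through the bound $|S_i|<2(|U|+|B_i|)/\alpha$, neatly sidesteps that case, but the substance is identical.
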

\begin{proof}
    Let $B$ the largest subset of $V(G)\setminus U$ of size at most $n/2$, for which $|N_{G\setminus U}(B)|< \alpha |B|/2$. We claim that this set satisfies the claim.
    First, $B$ is relatively small; indeed, if $|B|\geq 2|U|/\alpha$, then in $G$ we would have $|N_G(B)|\geq \alpha|B|\geq  2|U|$. Thus $|N_{G\setminus U}(B)|\geq \alpha|B|-|U|\geq \alpha|B|/2$, a contradiction with the definition of $B$. Hence $|B|\leq 2|U|/\alpha\leq \alpha n/50.$
    
    Now we show that $G':=G\setminus(U\cup B)$ is an $\alpha/2$-expander. Indeed, otherwise there is a set of size $|X|\leq |G'|/2$ such that $|N_{G'}(X)|\leq \alpha|X|/2$.
    We have two cases: if $|X\cup B|<|G'|/2$ then consider $Y:=X\cup B$ to get a contradiction with the maximality of $B$, as $|N_{G\setminus U}(Y)|\leq \alpha|X\cup B|/2$. Otherwise consider a subset $Y\subseteq X\cup B$ of size $|G'|/2$.
    Then we have $$|N_G(Y)|\leq |N_{G\setminus U}(X)|+|N_{G\setminus U}( B)|+ |U|+|(X\cup B)\setminus Y|\leq \alpha(|X|+|B|)/2+|U|+|B|\leq \alpha n/3,$$ whereas by initial expansion of $G$ we would need to have $|N_G(Y)|\geq \alpha|G'|/2$, a contradiction.
\end{proof}

\section{Cycle extenders}
The goal of this section is to prove \Cref{lem:cycle extender} below---this is a result that shows the existence of an appropriate gadget given that the host graph is mildly expanding, and is $2$-connected.
Before that, we prove several supporting results.

\begin{lemma}\label{lem:interlacing-chords}
    If $G$ has minimum degree $10$ then it contains a cycle with two interlacing chords. 
\end{lemma}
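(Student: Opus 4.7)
The plan is to extract the desired cycle from a longest path in $G$. Let $P = v_0 v_1 \cdots v_k$ be a longest path in $G$. By the maximality of $P$, every neighbor of $v_0$ must lie on $P$, and so does every neighbor of $v_1$ (otherwise such an off-path neighbor could be prepended to produce a strictly longer path). Consequently, both $v_0$ and $v_1$ have at least $10$ neighbors among $\{v_0, v_1, \ldots, v_k\}$.

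Set $i^* := \max\{i : v_0 v_i \in E(G)\}$; since $v_0$ has $10$ distinct neighbors on $P$, we have $i^* \geq 10$, and we consider the cycle $C := v_0 v_1 v_2 \cdots v_{i^*} v_0$. A short check on cyclic order shows that a chord $v_0 v_a$ (with $a \in N(v_0) \cap [2, i^*-1]$) and a chord $v_1 v_b$ (with $b \in N(v_1) \cap [3, i^*]$) interlace on $C$ if and only if $a < b$. Hence we are done as soon as we can exhibit such $a$ and $b$; all chords emanating from $v_0$ share the vertex $v_0$ and are therefore pairwise non-interlacing, which is why we need to recruit a chord from $v_1$ as well.

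The only obstruction to this is the degenerate case where every neighbor of $v_1$ lying on $C$ is at position at most $a^-$, where $a^- := \min(N(v_0) \cap [2, i^* - 1])$. In this case, $v_1$ has at most $a^-$ neighbors on $C$, forcing at least $10 - a^-$ of its neighbors into the ``tail'' $\{v_{i^* + 1}, \ldots, v_k\}$. Choosing such a tail-neighbor $v_b$, I pivot to the auxiliary cycle $C' := v_0 v_1 v_b v_{b-1} \cdots v_{i^*} v_0$ (which is simple since $b > i^* \geq 10$) and repeat the interlacing analysis, this time using the neighborhoods of the interior path-vertices $v_{i^*}, v_{i^*+1}, \ldots, v_b$ on $C'$, combined with the symmetric maximality information at the far end of $P$: the vertices $v_k$ and $v_{k-1}$ also have all of their $\ge 10$ neighbors on $P$.

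The main obstacle is this degenerate case: one must carefully track the positions of chord-endpoints across both cycles $C$ and $C'$, and combine the structural constraints imposed by the four extremal vertices $v_0, v_1, v_{k-1}, v_k$ so that an interlacing pair is guaranteed to appear on at least one of $C$ or $C'$. Fortunately, the constant $10$ in the hypothesis is a comfortably loose threshold, leaving ample slack for the pigeonhole estimates to go through.
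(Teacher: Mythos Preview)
Your proposal has two genuine gaps. First, the claim that every neighbour of $v_1$ lies on $P$ is not justified and is false in general: maximality of $P$ forces only $N(v_0),N(v_k)\subseteq V(P)$. To get $N(v_1)\subseteq V(P)$ you would need $v_1$ to be an endpoint of some longest path, which requires e.g.\ $v_0v_2\in E(G)$, and nothing guarantees this. Once $v_1$ may have off-path neighbours, your count ``at most $a^-$ neighbours on $C$, hence at least $10-a^-$ in the tail'' collapses. (The same objection applies to your later use of $v_{k-1}$.)

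Second, and more seriously, the degenerate case is not actually proved---the last two paragraphs are a description of work to be done, not an argument. On the auxiliary cycle $C'=v_0v_1v_bv_{b-1}\cdots v_{i^*}v_0$ the vertex $v_0$ contributes \emph{no} chords (its neighbours have indices $\le i^*$, and $v_1,v_{i^*}$ are its cycle-neighbours on $C'$), so all chords you control emanate from $v_1$ and are pairwise non-interlacing. You then appeal to ``the neighbourhoods of the interior path-vertices $v_{i^*},\ldots,v_b$'', but these are interior vertices of $P$ and you have no information whatsoever about where their neighbours lie; and you invoke $v_{k-1},v_k$, which need not be on $C'$ at all. The assertion of ``ample slack for the pigeonhole estimates'' is not a substitute for an argument. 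The paper avoids this case analysis entirely: it performs \emph{all} P\'osa rotations from one end of a longest path, takes the endpoint set $A$, notes $G[A\cup N_P(A)]$ has at most $3|A|$ vertices and minimum degree $\ge 10$, and concludes by non-planarity that two chords of the enclosing cycle must cross.
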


\begin{proof}
    Take a longest path \( xPy \) in the graph. Perform Pósa rotations starting from the endpoint \( x \), and let \( A \) be the set of all possible endpoints obtained through these rotations. Let \( w \in A \) be the vertex that lies closest to \( y \) along the original path \( xPy \).
Then there exists a cycle \( C \) containing all vertices in the segment of \( xPy \) between \( x \) and \( w \). Moreover, all vertices of \( A \) lie within this segment. By Pósa’s lemma, every neighbour of a vertex in \( A \) is contained either in \( A \) itself or in the neighbourhood \( N(A) \) of \( A \) along the path \( xPy \). Consequently, the induced subgraph \( G[N(A) \cup A] \) has at most \( 3|A| \) vertices and minimum degree at least \( 10 \).
Since such a graph is not planar, if we embed the vertices of \( C \) on a circle in the plane and draw the edges as straight-line chords, there must exist at least one pair of crossing edges—corresponding to interlacing chords of \( C \).

\end{proof}

\begin{proposition}\label{prop:cycle extend}
    Let $G$ be a $2$-connected graph with a cycle $C$ containing two interlacing chords.
    Let $C'$ be another vertex disjoint cycle. Then $G$ contains a chorded cycle of size at least $|C'|/2$. 
\end{proposition}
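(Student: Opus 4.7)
The strategy is to splice most of $C'$ into a cycle that traverses part of $C$, arranging things so that one of the interlacing chords remains a chord of the resulting cycle.

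Since $G$ is $2$-connected and $V(C), V(C')$ are disjoint sets each of size at least two, Menger's theorem supplies two vertex-disjoint paths $P_1, P_2$ joining $C$ to $C'$, with distinct endpoints $v_1, v_2 \in V(C)$ and $u_1, u_2 \in V(C')$. Let $Q$ be the longer of the two $u_1 u_2$-arcs of $C'$, so $|Q| \ge |C'|/2$. Any cycle of the form $D := v_1 P_1 u_1 Q u_2 P_2 v_2 R v_1$, where $R$ is a $v_2 v_1$-path in $C \cup \{ab, cd\}$, has length at least $|C'|/2$, so it suffices to choose $R$ so that $D$ has one of $ab, cd$ as a chord; equivalently, $R$ should contain both endpoints of some chord at non-consecutive positions.

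Write the endpoints of the two interlacing chords in cyclic order around $C$ as $a, c, b, d$. The pair $v_1, v_2$ splits $C$ into two arcs that partition $\{a, b, c, d\}$. If one arc contains both endpoints of a single chord, take $R$ to be that arc: since the chord is not an edge of $C$, its endpoints are non-adjacent on $R$, and hence it is a chord of $D$. By the interlacing property this handles every configuration except the \emph{transversal} case, in which each arc contains exactly one of $\{a, b\}$ and one of $\{c, d\}$.

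The transversal case is the main obstacle. Here the plan is to let $R$ traverse most of $C$, using exactly one of the two chords as a shortcut. For example, when $v_1 \in [d, a]$ and $v_2 \in [c, b]$ (so the two arcs contain $\{a, c\}$ and $\{b, d\}$ respectively), take $R$ to traverse the sub-arc of $[c, b]$ from $v_2$ to $c$, then $[a, c]$ from $c$ to $a$, cross via the chord $ab$ to $b$, traverse $[b, d]$ from $b$ to $d$, and finish with the sub-arc of $[d, a]$ from $d$ to $v_1$. A short check shows this is a simple $v_2 v_1$-path containing all four chord endpoints and using only $ab$ as an edge, so $cd$ becomes a chord of $D$; the other transversal configuration is handled symmetrically. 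The key point is that the theta-like structure of $C \cup \{ab, cd\}$ supplies enough topological flexibility to route such a path, provided one orients each of the four sub-arcs cut off by $\{a, b, c, d\}$ so as to avoid revisiting $v_1$ or $v_2$.
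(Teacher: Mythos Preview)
Your proof is correct and follows essentially the same approach as the paper's: apply Menger's theorem to get two disjoint $C$--$C'$ paths, then do a case analysis on where the endpoints $v_1,v_2$ land relative to the chord endpoints, taking an arc of $C$ in the non-transversal case and routing through one chord (so that the other survives as a chord) in the transversal case. The paper's proof is terser and defers the case analysis to a figure, whereas you spell out the transversal construction explicitly; you should note that when $v_1$ or $v_2$ coincides with one of $a,b,c,d$ the situation degenerates to the non-transversal case, so your partition assumption is harmless.
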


\begin{proof}
     By Menger's theorem, there are two vertex disjoint paths from $V(C')$ to $V(C)$. In each case depending on where the endpoints of the paths are in $C$, as shown in \Cref{fig:long chorded cycle}, we get a chorded cycle of length at least $|C'|/2$.
\end{proof}

\begin{figure}[h]
\centering
\begin{tabular}{c@{\hspace{1.5cm}}c}

\begin{tikzpicture}[scale=1]
  \coordinate (C1) at (0,0);
  \coordinate (C2) at (4,0);
  \def\rone{1.4}
  \def\rtwo{1.4}

  \draw[line width=0.8pt] (C1) circle (\rone);
  \draw[line width=0.8pt] (C2) circle (\rtwo);

  \coordinate (A) at ($(C1)+(240:\rone)$);
  \coordinate (B) at ($(C1)+(60:\rone)$);
  \coordinate (C) at ($(C1)+(300:\rone)$);
  \coordinate (D) at ($(C1)+(120:\rone)$);

  \coordinate (R) at ($(C1)+(90:\rone)$);
  \coordinate (Q) at ($(C1)+(270:\rone)$);

  \coordinate (E) at ($(C2)+(150:\rtwo)$);
  \coordinate (F) at ($(C2)+(270:\rtwo)$);

  \draw[line width=0.9pt] (A) -- (B);
  \draw[line width=0.9pt] (C) -- (D);

  \draw[line width=3.6pt, line cap=round, blue, bend right=30] (E) to (R);
  \draw[line width=3.6pt, line cap=round, blue, bend left=30] (F) to (Q);
  \draw[line width=3.6pt, line cap=round, blue] (F) arc (-90:150:\rtwo);
  \draw[line width=3.6pt, line cap=round, blue] ($(C1)+(90:\rone)$) arc (90:240:\rone);
  \draw[line width=3.6pt, line cap=round, blue] (A) -- (B);
  \draw[line width=3.6pt, line cap=round, blue] ($(C1)+(270:\rone)$) arc (-90:60:\rone);

  \draw[line width=0.8pt] (C1) circle (\rone);
  \draw[line width=0.8pt] (C2) circle (\rtwo);

  \foreach \p in {A,B,C,D,E,F,R,Q}
    \fill (\p) circle (3.2pt);
\end{tikzpicture}
&
\begin{tikzpicture}[scale=1]
  \coordinate (C1) at (0,0);
  \coordinate (C2) at (4,0);
  \def\rone{1.4}
  \def\rtwo{1.4}

  \draw[line width=0.8pt] (C1) circle (\rone);
  \draw[line width=0.8pt] (C2) circle (\rtwo);

  \coordinate (A) at ($(C1)+(240:\rone)$);
  \coordinate (B) at ($(C1)+(60:\rone)$);
  \coordinate (C) at ($(C1)+(300:\rone)$);
  \coordinate (D) at ($(C1)+(120:\rone)$);

  \coordinate (R) at ($(C1)+(0:\rone)$);      
  \coordinate (Q) at ($(C1)+(270:\rone)$);

  \coordinate (E) at ($(C2)+(150:\rtwo)$);
  \coordinate (F) at ($(C2)+(270:\rtwo)$);

  \draw[line width=0.9pt] (A) -- (B);
  \draw[line width=0.9pt] (C) -- (D);

  \draw[line width=3.6pt, line cap=round, blue, bend right=30] (E) to (R);
  \draw[line width=3.6pt, line cap=round, blue, bend left=30] (F) to (Q);
  \draw[line width=3.6pt, line cap=round, blue] (F) arc (-90:150:\rtwo);

  \draw[line width=3.6pt, line cap=round, blue]
    ($(C1)+(0:\rone)$) arc (0:270:\rone);

  \draw[line width=0.8pt] (C1) circle (\rone);
  \draw[line width=0.8pt] (C2) circle (\rtwo);

  \foreach \p in {A,B,C,D,E,F,R,Q}
    \fill (\p) circle (3.2pt);
\end{tikzpicture}

\end{tabular}
\caption{Depending on where the vertex-disjoint paths meet the first cycle (either on opposite sides of each chord or not), we obtain the two configurations shown. In both cases, the blue cycle contains a chord and includes at least half of the vertices of the second cycle.}
\label{fig:long chorded cycle}
\end{figure}
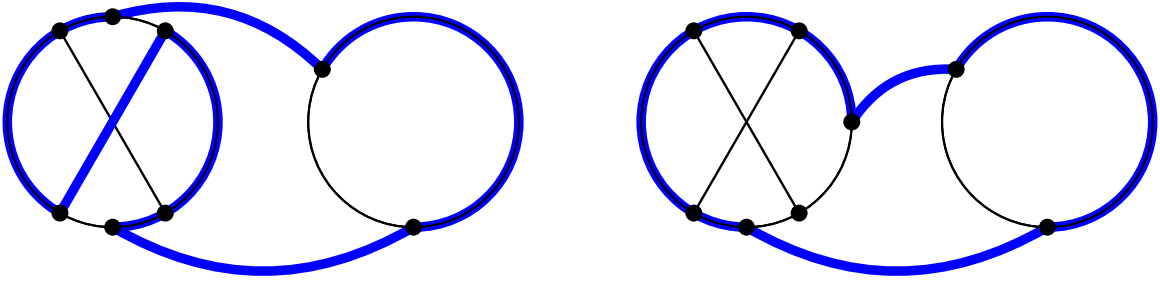

\begin{proposition}\label{prop:small diameter sets}
    Let $G$ be a graph on $n$ vertices which is an $\frac{1}{\log^5(n)}$-expander for large enough $n$. Then, for all $n\geq m\geq \log^{8}(n)$, there is $S\subset V(G)$,  of size $m$ and diameter at most $\log^7(n)$.
\end{proposition}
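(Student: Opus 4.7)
The plan is to grow a breadth-first ball around an arbitrary root vertex and argue that, thanks to the expansion hypothesis, it reaches size $m$ within radius $O(\log^6 n)$. Once that is established, truncating the ball yields an $S$ of the desired size, and the diameter in $G$ is bounded by twice the radius via the triangle inequality through the root. Set $\alpha := 1/\log^5 n$, fix any $v \in V(G)$, and write $B_r$ for the set of vertices at distance at most $r$ from $v$ in $G$.

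First, I would handle the growing phase. As long as $|B_r| \le n/2$, applying the $\alpha$-expansion hypothesis directly to $B_r$ gives $|B_{r+1}| \ge (1+\alpha)|B_r|$. Iterating from $|B_0|=1$, after at most $r_1 = O(\alpha^{-1}\log n) = O(\log^6 n)$ steps, either $|B_{r_1}| \ge m$ (which happens in particular if $m \le n/2$) or else $|B_{r_1}| \ge n/2$. The first case already delivers the desired ball.

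In the remaining case ($m > n/2$), I would switch to a shrinking-complement phase. Put $T_r := V(G)\setminus B_{r+1}$; since $|B_r| \ge n/2$, we have $|T_r| \le n/2$, so expansion gives $|N(T_r)| \ge \alpha |T_r|$. Every vertex of $N(T_r)$ lies in $B_{r+1}$ yet has a neighbor at distance $\ge r+2$ from $v$, so it must sit in $B_{r+1}\setminus B_r$. Hence $|B_{r+1}| - |B_r| \ge \alpha\bigl(n-|B_{r+1}|\bigr)$, which rearranges to
\[
n - |B_{r+1}| \;\le\; \frac{n-|B_r|}{1+\alpha}.
\]
Thus the deficit $n-|B_r|$ decays geometrically at rate $1+\alpha$, and since it is a non-negative integer, another $O(\alpha^{-1}\log n) = O(\log^6 n)$ steps push $|B_r|$ past $m$ (indeed all the way to $n$).

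Combining the two phases yields some $r \le C\log^6 n \le \tfrac{1}{2}\log^7 n$ (for $n$ sufficiently large) with $|B_r| \ge m$. Picking any $S\subseteq B_r$ with $|S|=m$, every pair of vertices of $S$ is joined by a path through $v$ of length at most $2r$, so $\operatorname{diam}_G(S) \le 2r \le \log^7 n$, as required. The only step that needs real care is the complement-shrinking phase, where one has to verify that the neighborhood of $T_r$ is captured by the thin shell $B_{r+1}\setminus B_r$; the rest is a routine geometric BFS calculation.
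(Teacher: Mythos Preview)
Your proof is correct and follows the same underlying idea as the paper's one-line argument: expansion forces BFS balls to grow at rate $1+\alpha$, so the diameter of $G$ is $O(\alpha^{-1}\log n)=O(\log^6 n)\le \log^7 n$; the paper simply asserts this diameter bound and then prunes leaves from a BFS spanning tree down to size $m$. Your two-phase argument (growing $B_r$, then shrinking $n-|B_r|$) is just an explicit proof of that diameter bound, so the approaches coincide.

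One point worth flagging: by taking an \emph{arbitrary} $S\subseteq B_r$ of size $m$, you obtain a set of small diameter in $G$ but not necessarily a connected one. The proposition as stated does not ask for connectedness, so your argument suffices; however, the paper's leaf-pruning yields a subtree, hence a connected $S$, and this connectedness is actually used downstream (the sets $A_i$ in \Cref{lem:routing to big sets} and \Cref{lem:cycle extender} are required to be connected). The fix is immediate---prune leaves of the BFS tree restricted to $B_r$ instead of choosing $S$ arbitrarily---but it is worth being aware of.
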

\begin{proof}
   Clearly, $G$ has diameter at most $\log^{7}(n)$, so it contains a tree of this depth. Remove leaves until the tree reaches the required size.
\end{proof}

\begin{lemma}\label{lem:shorten cycle}
    Let $G$ be a $\frac{1}{\log^5(n)}$-expander on $n$ vertices for large enough $n$. 
Let $C\subset V(G)$ be a cycle of length $m\geq \log^{30}(n)$ with a chord. 
Then, we can find a cycle $C'$ of length $\log^{8}(n)\leq \ell \leq \log^{30}(n)$ with a chord. 
\end{lemma}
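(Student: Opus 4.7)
The plan is to build $C'$ as the union of two internally vertex-disjoint paths $P_1, P_2$ from $u$ to $v$ in $G$, neither of which uses the chord edge $e = uv$. Then $u, v \in V(C')$ while $uv \notin E(C')$, so $e$ is automatically a chord of $C'$. I will make $P_1$ have medium length $\approx \log^{29}(n)$ by combining a sub-arc of $C$ with a short expander detour, and $P_2$ a short path of length $O(\log^6(n))$ obtained from the diameter bound for expanders.

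Let $B$ be the longer of the two arcs of $C$ between $u$ and $v$, so $|B| \geq m/2 \geq \log^{30}(n)/2$. Set $t := \log^{29}(n)$ and let $B' \subseteq B$ be the sub-arc of length $t$ starting at $u$ and ending at some vertex $w$; this is well-defined since $t \leq |B|$ for $n$ large. Apply \Cref{lem:clean for expansion} with $U := V(B') \setminus \{w\}$. The hypothesis $|U| = t \leq \alpha^2 n/100$ holds for large $n$, because $\alpha = 1/\log^5(n)$ and $\log^{29}(n) \ll n/(100\log^{10}(n))$. This produces a bad set $B_{\mathrm{bad}}$ of size $\leq 2t/\alpha = O(\log^{34}(n))$ and an $\alpha/2$-expander $G_1 := G \setminus (U \cup B_{\mathrm{bad}})$. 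After standard bookkeeping---shifting $B'$ along $B$ if $w \in B_{\mathrm{bad}}$, and arguing from the definition of the bad set that $v \notin B_{\mathrm{bad}}$---we have $w, v \in V(G_1)$. Since $G_1$ is an $\alpha/2$-expander on at most $n$ vertices, its diameter is $O(\log^6(n))$, so it contains a path $S$ from $w$ to $v$ of length $|S| = O(\log^6(n))$. Take $P_1 := B' \cup S$: this is a simple path from $u$ to $v$ of length $t + |S|$, and $e \notin E(P_1)$ because $e$ is a chord of $C$ and hence $e \notin E(B')$, while $u \in U$ means $u \notin V(G_1)$ so $S$ cannot use $e$ either.

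For $P_2$, apply \Cref{lem:clean for expansion} a second time with $U' := V(P_1) \setminus \{u, v\}$, of size $t + |S| - 1 \leq \alpha^2 n/100$, to obtain an $\alpha/2$-expander $G_2$ with $u, v \in V(G_2)$ (via the same bookkeeping). In the expander $G_2 \setminus \{e\}$, the diameter bound yields a path $P_2$ from $u$ to $v$ of length $O(\log^6(n))$, internally disjoint from $P_1$ and not using $e$. The cycle $C' := P_1 \cup P_2$ therefore has length $t + |S| + |P_2| = \log^{29}(n) + O(\log^6(n))$, which lies in the required range $[\log^8(n), \log^{30}(n)]$ for $n$ sufficiently large, and admits $e$ as a chord.

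The main technical hurdle is the bookkeeping needed to keep the distinguished vertices $u, v, w$ present in the successive cleaned expanders $G_1, G_2$. For $w$ this is easy: since $|B_{\mathrm{bad}}|$ is polylogarithmic while $|B|$ is much larger, shifting $B'$ by a few vertices along $B$ always finds an acceptable $w$. For the fixed chord endpoints $u, v$, avoiding the bad sets relies either on a direct argument from the definition of the bad set in \Cref{lem:clean for expansion} (a vertex with many neighbors in the remaining graph cannot be in the bad set) or on a minor modification of the cleaning step to protect $u$ and $v$ explicitly. These manipulations are routine in the sublinear-expander literature, but they are the principal technical ingredient here.
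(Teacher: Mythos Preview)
Your overall strategy—reuse the chord $e=uv$ and build two internally disjoint $u$–$v$ paths of total length about $\log^{29}n$—is reasonable, but the part you label ``routine bookkeeping'' is in fact where the proof breaks. \Cref{lem:clean for expansion} gives you no control over \emph{which} vertices land in the bad set; the set $B_{\mathrm{bad}}$ is defined only by a global non-expansion condition, and a fixed vertex such as $v$ (or later $u$) can perfectly well lie in it. Your suggested fix, ``a vertex with many neighbours in the remaining graph cannot be in the bad set'', is not available: nothing in the hypotheses gives $u$ or $v$ more than the three neighbours forced by the cycle and the chord. For the same reason the final step fails outright: suppose $\deg_G(u)=3$, with cycle neighbours $a\in B'$ and $b$ on the other arc. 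If the detour $S$ happens to pass through $b$—and nothing prevents this—then after deleting $V(P_1)\setminus\{u,v\}$ the only neighbour of $u$ left in $G$ is $v$ via $e$. In that case no $u$–$v$ path avoiding $e$ exists at all, regardless of how you clean; the diameter bound for $G_2\setminus\{e\}$ is simply vacuous because $u$ is isolated there.

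The paper sidesteps exactly this difficulty by never trying to connect individual vertices. It argues by minimality: take a shortest chorded cycle of length at least $\log^8 n$, assume it still has length at least $\log^{30}n$, and then repeatedly apply \Cref{lem:connecting-path} between \emph{large sub-paths} of the cycle (of length $\gg \log^{15}n$) while avoiding only polylogarithmic forbidden sets. Because the sets being connected are always much larger than what must be avoided, the hypothesis $|X|,|Y|>2|B|/\alpha$ of \Cref{lem:connecting-path} is automatic, and no cleaning or protection of named vertices is needed. If you want to salvage your direct construction, you would first have to grow large connected ``anchors'' around $u$ and $v$ that are disjoint from everything you later delete; that is possible but is genuine work, not bookkeeping.
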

\begin{proof}
Let $C$ be a shortest cycle of length at least $\log^{8}(n)$ which contains a chord, and suppose it is of length at least $\log^{30}n$.
First note that the chord splits the cycle into two paths of length at least $\log^{12}{n}$, as otherwise if one of the paths, call it $M$, is shorter, we can shorten the other by using \Cref{lem:connecting-path}.
Indeed, if the longer path $L$ consists of consecutive segments $L_1,L_2,L_3$, where $|L_1|=|L_2|=\frac{|L|-\log^{10}n}{2}\geq \log^{29}n$ and $|L_2|=\log^{10}n$, then there is a path of length at most $\log^8 n$ between $L_1$ and $L_3$ which avoids $L_2\cup M$, thus giving a shorter chorded cycle.

Consider two arbitrary vertices $x,y$ distinct from the endpoints of the chord that are at the largest distance in $C$, and let $P_1$ and $P_2$ be the paths with endpoints $x,y$ in $C$. Let $B$ be the set of vertices at distance at most $\log^{15} n$ from $x,$ or $y$. By \Cref{lem:connecting-path} there is a path of length at most $\log^7n$ between $P_1$ and $P_2$ which contains no vertices in $B$; let $w,z$ be the endpoints of a shortest such path $Q_1$, and $B'$ the vertices at distance at most $\log^9 n$ from $w,z$. Consider the paths $P_3,P_4$ in $C$ with endpoints $w,z$, and note that $|P_3|,|P_4|\geq |B|/2\geq \log^{15 }n$. Now consider the shortest path $Q_2$ between $P_3$ and $P_4$ which avoids $B'$, and note that $|Q_2|\leq \log^7 n$ as well.

If either of $Q_1$ or $Q_2$ is on the same side of the chord, we get a contradiction by getting a shorter cycle with a chord; indeed we can replace the interval between the endpoints of $Q_i$ in $C$ by the path $Q_i$ --- the interval contains either half of $B$ or half of $B'$, which are of size at least $\log^8n$, while $Q_i$ is of length at most $\log^7 n$.
On the other hand, if both of the paths cross the chord, we can use both $Q_1$ and $Q_2$ instead of the two intervals in $C$ whose endpoints are among $x,y,w,z$ which do not contain any endpoints of the chord --- these again contain half of $B'$, so we are done.

    
\end{proof}

\begin{lemma}\label{lem:routing to big sets}
    Let $G$ be a $n$-vertex $1/\log^{5}(n)$-expander for large $n$. 
    Let $C$ be a chorded cycle of length between $\log^{30}(n)$ and $\log^{50}(n)$. Disjoint from it, let $A_1,A_2,A_3$ be three connected, vertex disjoint sets of size at least $s\geq \log^{50}(n)$ each of which with diameter at most $\log^{8}(n)$. Then for two of those sets there exists connected subsets $A_i'\subseteq A_i$ and $A_j'\subseteq A_j$ of sizes at least $s/2$ with two vertex disjoint paths of length at most $\log^{7}(n)$ from $C$ to $A_i',A_j'$ whose initial vertices on the same side of the chord on $C$. 
\end{lemma}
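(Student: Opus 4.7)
The plan is to produce three pairwise vertex-disjoint short paths from $C$ to $A_{1},A_{2},A_{3}$ via iterated applications of \Cref{lem:connecting-path}, and then exploit the fact that the chord of $C$ splits $V(C)$ into two arcs to pigeonhole.

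Concretely, I build the paths $P_{1},P_{2},P_{3}$ one at a time. At step $i$, apply \Cref{lem:connecting-path} in $G$ (which is an $\alpha$-expander with $\alpha=1/\log^{5}n$) with $X=C$, $Y=A_{i}\setminus\bigcup_{j<i}V(P_{j})$, and bad set $B=\bigcup_{j<i}V(P_{j})$. Each such path has length at most $2\log_{1+\alpha/2}n=O(\log^{6}n)\le\log^{7}n$, so after two iterations $|B|\le 2\log^{7}n$ and hence $2|B|/\alpha=O(\log^{12}n)$. The hypothesis $|X|,|Y|>2|B|/\alpha$ is comfortably satisfied: $|C|\ge\log^{30}n$ and $|A_{i}\setminus B|\ge s-2\log^{7}n\ge s/2\ge\log^{50}n/2$, both of which dwarf $O(\log^{12}n)$. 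Thus \Cref{lem:connecting-path} applies at every stage and produces a path vertex-disjoint from those already chosen, of length at most $\log^{7}n$.

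The chord of $C$ partitions $V(C)$ into two arcs; by pigeonhole two of the three endpoints on $C$ of $P_{1},P_{2},P_{3}$ lie on the same arc. Relabelling, I may assume these are $P_{1},P_{2}$. Setting $A_{1}':=A_{1}$ and $A_{2}':=A_{2}$ (both are connected and of size $\ge s > s/2$), the paths $P_{1},P_{2}$ are two vertex-disjoint paths of length at most $\log^{7}n$ from $C$ to $A_{1}',A_{2}'$ whose initial vertices on $C$ lie on the same side of the chord, as required.

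I do not foresee a substantive obstacle: the argument is essentially iterated routing plus pigeonhole, and the polylogarithmic gap between $s,|C|\ge\log^{30}n$ and the threshold $2|B|/\alpha=O(\log^{12}n)$ in \Cref{lem:connecting-path} leaves plenty of slack for the bookkeeping. The one subtle point is the choice $A_{i}':=A_{i}$, which trivially works because nothing in the conclusion requires $A_{i}'$ to avoid the other path $P_{j}$. The diameter hypothesis on the $A_{i}$'s appears not to be needed for this statement itself and is presumably invoked later when this lemma is combined with the block--cut structure in the proof of \Cref{lem:cycle extender}.
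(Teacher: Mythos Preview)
Your argument is correct for the lemma as literally stated, and considerably simpler than the paper's. The paper takes a more involved route: it fixes spanning trees $T_i\subseteq G[A_i]$ rooted at chosen vertices $x_i$, identifies in each a small set $B_i$ of \emph{dangerous} vertices (non-leaves whose deletion detaches a subtree of size at least $s/\log^{10}n$ from the root), bounds $|B_i|\le\log^{18}n$ using the diameter hypothesis, and then routes each $P_i$ so as to avoid the dangerous sets $B_j$ of the other trees. The payoff is a genuinely stronger conclusion than the one stated: the connected subsets $A_i'$ produced there are \emph{disjoint from the other path} $P_j$, since one takes $A_i'$ to be the root-component of $T_i$ after deleting $V(P_j)\cap A_i$. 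That extra disjointness is precisely what \Cref{def:cycle extender} demands (the sets $A_1,A_2$ must be disjoint from $C\cup P_1\cup P_2$), so the paper's lemma statement is in fact weaker than both what they prove and what they later use. Your reading of the statement and your remark about the diameter hypothesis are both accurate: the diameter bound enters only in the paper's proof, to control $|B_i|$ via tree depth.

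One cosmetic point: at step $i$ you take $X=C$ and $B=\bigcup_{j<i}V(P_j)$, but these are not disjoint since each $P_j$ meets $C$; replace $X$ by $C\setminus B$ (losing at most $2\log^{7}n$ vertices, which is negligible against $|C|\ge\log^{30}n$) and \Cref{lem:connecting-path} applies as you intend.
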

\begin{proof}
   By shrinking we may assume $|A_1|=|A_2|=|A_3|=s$. Let $x_1\in A_1$, $x\in A_2$ and $x_3\in A_3$ be three arbitrary vertices. Let $T_1,T_2, T_3$ be three spanning trees $T_i\in G[A_i]$ and $T_i$ is rooted at $x_i$ and is of diameter $\log^8(n)$ 
   For each $T_i$, we define $B_i\subset V(T_i)$ a set  of \textit{dangerous}
vertices --- $y\in T_i$ is dangerous if it is not a leaf, and by deleting it the component not containing $x_i$ has size at least $s/\log^{10}(n)$. 

First we show that $B_i$ is of size at most $\log^{18}(n)$. Note that every set $D\subset V(T_i)$ of dangerous vertices in which no vertex is a ancestor of another is of size at most $\log^{10}(n)$. Indeed, for $u,v\in D$ by assumption the component in $T_i-v$ which does not contain $x_i$ is disjoint from the component in $T_i-u$ which does not contain $x_i$; hence for the total size of those components to be less than $n$, we have $|D|\leq \log^{10}(n)$.
By assumption there are at most $\log^{8}(n)$ ancestors of a given vertex, as this is a bound on the depth of the tree.
Hence in total there are at most $
\log^{18}(n)$ dangerous vertices in $T_i$. 

We now find a path $P_1$ from $A_1$ to $C$ of size at most $\log^{7}(n)$ avoiding $B_2\cup B_3$, by Lemma~\ref{lem:connecting-path}. We may assume $P_1$ has exactly one vertex $y_1$ in $A_1$. Let $Q_1$ be the path in $T_1$ from $y_1$ to $x_1$. Similarly, we find a path $P_2$ from $A_2$ to $C$ avoiding $V(P_1)\cup V(Q_1)\cup B_1\cup B_2$ of length at most $\log^{7}(n)$. As before, let $Q_2$ be the path in $T_2$ from the first vertex of $P_2$ to $x_2$.  Finally, we find a path $P_3$ from $A_3$ to $C$ avoiding $V(P_1)\cup V(P_2) \cup V(Q_1)\cup V(Q_2)\cup B_1\cup B_2$. 
By construction, we have three pairwise vertex disjoint paths $P_i$ from $A_i$ to $C_i$. By pigeonhole, we may assume two of them say $P_1,P_2$ end on the same side of the chord in $C$. 
Finally, note that by deleting $V(P_2)\cap A_1$, the component of $x_1$ has size at least $s- |P_1|s/\log^{10}(n)\geq s/2$. The same holds for $A_2$, as we wanted to show.

\end{proof}

Finally, we need the following definition to state the main result of this section.

\begin{definition}\label{def:cycle extender}
    A subgraph $F$ of an $n$ vertex graph is a \emph{cycle extender} if $F$ is the union of the following graphs (see \Cref{fig:gadgets}):
         \begin{itemize}
             \item A cycle of length at most $\log^{30} n$.
             \item Two disjoint paths, $P_1$ and $P_2$ of length at most $2\log^{30}n$, such that their endpoints are consecutive vertices in $C$, but they are otherwise disjoint from $C$.
             \item Two disjoint sets $A_1,A_2$ of diameter at most $\log^{7}(n)$ and size $n^{1/4}$, where each $A_i$ contains the other endpoint of $P_i$, but is otherwise disjoint from $C\cup P_1 \cup P_2$.
         \end{itemize}
\end{definition}

We can now state the main result of this section. 
\begin{lemma}\label{lem:cycle extender}
    Let $G$ be a $2$-connected $n$-vertex graph that is an $1/\log^5{n}$-expander and has average degree at least $20$, and $n$ is large enough. 
    Then $G$ contains a cycle extender.
\end{lemma}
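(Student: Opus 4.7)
The plan is to build a short chorded cycle and then pivot across the chord so that the two chord endpoints become consecutive vertices of a new cycle; short paths extending from these endpoints into large diameter-bounded sets then witness the cycle extender. First I would produce a chorded cycle of controlled length: since $d(G)\ge 20$, $G$ has a subgraph of minimum degree at least $10$, and \Cref{lem:interlacing-chords} applied there yields a cycle $\tilde C\subseteq G$ with at least one chord. By combining \Cref{lem:shorten cycle} to trim long cycles with the interlacing-chord structure (and, if necessary, with \Cref{prop:cycle extend} applied to an auxiliary cycle supplied by \Cref{thm: long cycle}) I would obtain a chorded cycle $C_1$ whose length lies in a range that both allows the routing argument of \Cref{lem:routing to big sets} to apply and is small enough to fit the final length budget. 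Fix a chord $uv$ of $C_1$ and denote the two arcs of $C_1$ between $u$ and $v$ by $\alpha$ and $\beta$, chosen so that $|\beta|+1\le \log^{30} n$ and $|\alpha|\le 2\log^{30} n$.

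Next I would produce three large diameter-bounded sets disjoint from $C_1$. Since $|V(C_1)|\le 3\log^{30} n \ll \tfrac{1}{100\log^{10} n}\cdot n$, \Cref{lem:clean for expansion} applied with $U=V(C_1)$ retains a $\tfrac{1}{2\log^5 n}$-expander on nearly all of $V(G)\setminus V(C_1)$; iterating \Cref{prop:small diameter sets} three times, re-cleaning between extractions via \Cref{lem:clean for expansion}, produces three vertex-disjoint connected sets $A_1,A_2,A_3$, each of size $2n^{1/4}$ and diameter at most $\log^7 n$, all disjoint from $C_1$. I would then invoke \Cref{lem:routing to big sets} with input $C_1,A_1,A_2,A_3$, obtaining, after WLOG relabelling, connected subsets $A_1'\subseteq A_1,\ A_2'\subseteq A_2$ of size $\ge n^{1/4}$ and vertex-disjoint paths $P_1,P_2$ of length at most $\log^7 n$ from $C_1$ to $A_1',A_2'$, whose starting vertices $a_1,a_2$ lie on the same arc, say $\alpha$, in the cyclic order $u,a_1,a_2,v$.

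To conclude, define $C:=\beta\cup\{uv\}$; this is a cycle of length $|\beta|+1\le \log^{30} n$, on which $u$ and $v$ are consecutive via the chord edge. Writing $\alpha_1$ for the sub-arc of $\alpha$ from $u$ to $a_1$ and $\alpha_3$ for the sub-arc from $a_2$ to $v$, set $P_1':=\alpha_1\cup P_1$ and $P_2':=\alpha_3\cup P_2$. Each extended path has length at most $|\alpha|+\log^7 n\le 2\log^{30} n$ and begins at $u$ and $v$, respectively. The disjointness requirements of \Cref{def:cycle extender} then follow directly from the construction: $\alpha_1,\alpha_3$ are internally disjoint sub-arcs of $\alpha$ and hence disjoint from $\beta$ and from the chord edge away from $u,v$; the paths $P_1,P_2$ are vertex-disjoint and meet $C_1$ only at $a_1,a_2$; and $A_1',A_2'$ lie outside $V(C_1)$. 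Thus $(C,P_1',P_2',A_1',A_2')$ is a cycle extender. The main technical obstacle is the first step: ensuring that $C_1$ is simultaneously \emph{balanced} enough that both $|\beta|\le \log^{30} n$ and $|\alpha|\le 2\log^{30} n$ hold (so that $C$ and the extended paths all stay within the cycle extender's length budget) and \emph{long} enough for the hypotheses of \Cref{lem:routing to big sets} to be satisfied. This requires either a careful choice of chord among multiple candidates in $C_1$, or a refined shortening that stops within the target length window rather than shortening all the way down to $\log^8 n$.
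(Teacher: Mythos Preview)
Your approach is essentially the paper's: obtain a short chorded cycle via \Cref{lem:interlacing-chords}, \Cref{prop:cycle extend} (with a long cycle from \Cref{thm: long cycle}) and \Cref{lem:shorten cycle}; build three large small-diameter sets by \Cref{prop:small diameter sets} after cleaning with \Cref{lem:clean for expansion}; route two paths to the same side of the chord via \Cref{lem:routing to big sets}; and then take the other arc together with the chord edge as the cycle $C$, extending the two paths along the remaining pieces of the first arc. The paper phrases the last step as ``remove the internal vertices of the sub-arc between $a_1$ and $a_2$'', which is exactly your $(C,P_1',P_2')$ construction.

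The ``main technical obstacle'' you flag is not one. Once \Cref{lem:shorten cycle} hands you a chorded cycle $C_1$ with $|C_1|\le \log^{30} n$, \emph{both} arcs satisfy $|\alpha|,|\beta|\le \log^{30} n$ automatically, so no balancing or chord selection is needed; you may simply call $\alpha$ whichever arc the routing lemma lands $a_1,a_2$ on. As for the lower bound $|C_1|\ge \log^{30} n$ in the hypothesis of \Cref{lem:routing to big sets}: inspecting its proof, only the \emph{upper} bound on $|C|$ is used (so that $V(C)$ together with the dangerous sets and previously built paths is small enough for \Cref{lem:connecting-path}); the paper itself applies the lemma with $|Q|$ possibly as small as $\log^8 n$. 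So you can drop the caveat in your final paragraph and the proof goes through as written.
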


\begin{proof}
    Let $G'$ be a subgraph with minimum degree $10$. By \Cref{lem:interlacing-chords} there is a cycle $C$ with interlacing chords in $G'$ and thus in $G$ as well. If $C$ is not already of length say $\sqrt n$, by \Cref{lem:clean for expansion} we can remove $V(C)$ and a set $B$ of size $|B|\leq n^{4/5}$ from $G$, to obtain a $1/2\log^{5} n$-expander $G''$. By \Cref{thm: long cycle} this graph contains a cycle $C'$ of length at least $n/\log^{16} n$.

    By \Cref{prop:cycle extend}, we thus get a chorded cycle in $G$ of length at least $n/(2\log^{16} n)$. We now apply \Cref{lem:shorten cycle} to get a chorded cycle $Q$ of length between $\log^8 n$ and $\log^{30} n$. Now, using the expansion property, \Cref{lem:clean for expansion} and \Cref{prop:small diameter sets} we can get three disjoint sets $A_1,A_2,A_3$ of small diameter (at most $\log^8 n$) and size $n^{2/3}$ disjoint from $Q$. Thus we can use \Cref{lem:routing to big sets} to connect $Q$ via two paths $P_1,P_2$ of length at most $\log^7 n$ to large connected subsets, say $A_1' \subseteq A_1$ and $A_2' \subseteq A_2$ of sizes $\sqrt n$, such that the endpoints of the paths in $Q$ are on the same side of the chord. Denote by $P$ the path in $Q$ between those two endpoints, and which is on the same side of the chord. Now, $Q\cup P_1\cup P_2\cup A_1'\cup A_2'$ without the internal vertices of $P$ is the required cycle extender.
\end{proof}

\section{Cycles with many chords}
    We are ready to prove \Cref{thm:main}.
\begin{proof}[Proof of Theorem $1.1$]

Let $\varepsilon_1>0$ be given by \Cref{thm:expander-subgraph}, and chose $k=1$. Let $C_0(\varepsilon_1)$ be large enough, and assume $G$ has average degree at least $C\geq C_0$. We may assume that $G$ is $C_4$-free by \Cref{thm:c4free}.
Pass to a $(\varepsilon_1, k)$-robust-expander subgraph $H\subset G$ with $\delta(H)\geq d$ where $d$ is still large enough compared to $\varepsilon_1$. Suppose $H$ has $n$ vertices. 
Assume the contrary, that there does not exists a cycle with many chords.


\subsection{Gadgets and how we use them}
Fix \( m = 2^{\log^{1/4}(n)} \), and let \( L \) be the set of vertices with degree at least \( m \). 
There are two basic kinds of structures we hope to find, and depending on the nature of \( H \), we will argue that many such structures appear.
\begin{figure}[h]
\begin{center}
\begin{tikzpicture}[scale=1, every node/.style={font=\small},
    cluster/.style={draw, rounded corners=8pt, dashed, inner sep=15pt},
    vert/.style={circle,draw,inner sep=2pt,minimum size=7pt,fill=white},
    smallp/.style={line width=1pt},
    pathstyle/.style={decorate, decoration={snake, amplitude=0.6mm, segment length=3mm}, line width=1pt}
    ]
\usetikzlibrary{decorations.pathmorphing}

\begin{scope}[xshift=-5cm]
  \node[vert] (x) at (-1,0) {$x$};
  \node[vert] (z1) at (1.6, 0.9) {$z_1$};
  \node[vert] (z2) at (1.3, 0)   {$z_2$};
  \node[vert] (z3) at (1.6,-0.9) {$z_3$};

  \draw[pathstyle] (x) .. controls (-0.5,0.9) and (0.5,1.1) .. (z1); 
  \draw[smallp] (x) -- (z2);                                        
  \draw[pathstyle] (x) .. controls (-0.5,-0.9) and (0.5,-1.1) .. (z3); 

  \node[cluster, fit=(z1) (z2) (z3)] (L) {};
  \node[font=\small] at (0.8,1.5) {$L$};
\end{scope}

\begin{scope}[xshift=4.5cm]
  \def\R{1.6}
  \foreach \i in {1,...,8} {
    \pgfmathsetmacro{\ang}{90 - (\i-1)*360/8}
    \node[vert] (c\i) at ({\R*cos(\ang)},{\R*sin(\ang)}) {};
  }
  \foreach \i/\j in {1/2,2/3,3/4,4/5,5/6,6/7,7/8,8/1} {
    \draw[smallp] (c\i) -- (c\j);
  }
  \node[font=\small] at (0,-2.1) {$C$};

  \coordinate (a1center) at ($(c2)+(2.6,2.0)$);
  \coordinate (a2center) at ($(c3)+(2.6,-2.0)$);

  \coordinate (p1end) at ($(a1center)+(-0.5,-0.3)$);
  \coordinate (p2end) at ($(a2center)+(-0.5,0.3)$);

  \draw[pathstyle] (c2) .. controls ($(c2)+(0.8,1.4)$) and ($(p1end)+(-0.6,0.6)$) .. (p1end);
  \draw[pathstyle] (c3) .. controls ($(c3)+(0.8,-1.4)$) and ($(p2end)+(-0.6,-0.6)$) .. (p2end);

  \node[font=\small] at ($(c2)!0.65!(p1end)+(0,0.25)$) {$P_1$};
  \node[font=\small] at ($(c3)!0.65!(p2end)+(0,-0.25)$) {$P_2$};

  \node[cluster, fit=(a1center), label=above:{$A_1$}] {};
  \node[cluster, fit=(a2center), label=below:{$A_2$}] {};
\end{scope}

\end{tikzpicture}
\caption{The two types of useful structures: a nice spider on the left, and a cycle extender on the right}\label{fig:gadgets}
\end{center}
\end{figure}
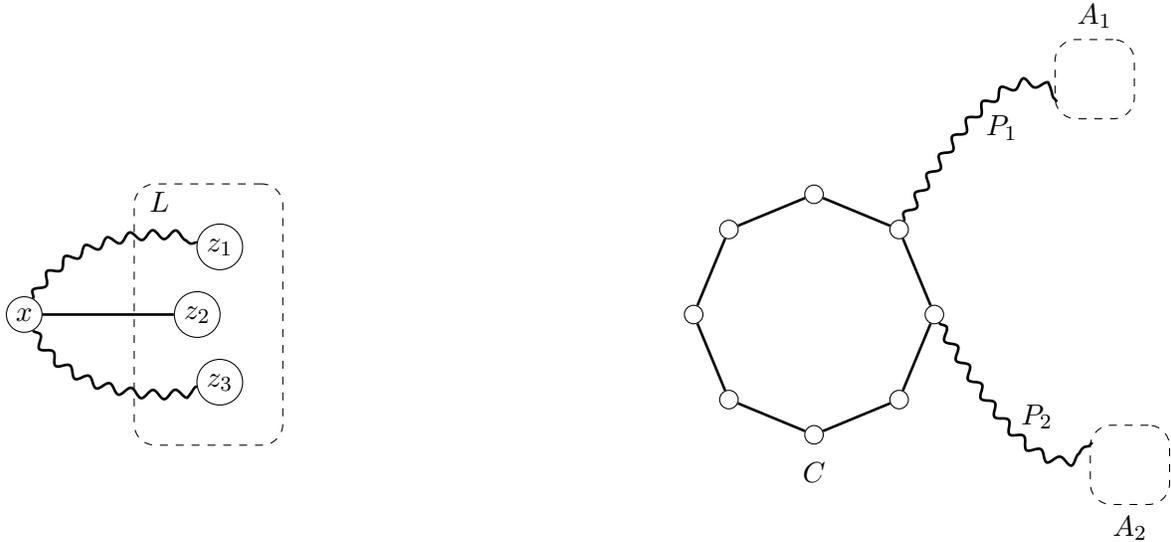

     \begin{enumerate}[label=\textbf{Type~\arabic*:}]
         \item A spider graph $S$ with center $x$ and three leaves $Z=\{z_1,z_2,z_3\}$ is \emph{a nice spider} if $Z\subset L$, the path between $x$ and $z_2$ is of length one, and the other two paths are of length at most $\log^8 n$.
         \item A cycle extender (see \Cref{def:cycle extender}).
     \end{enumerate}

     \begin{lemma}\label{lem:connecting gadgets}
    If $H$ contains $2^{\log^{1/100}(n)}$ vertex disjoint copies of graphs that are either a nice spider or a cycle extender, then for some $\ell>0$ it contains a cycle of lenth at least $\ell$, with at least $\ell/\log^{1000}\ell$ many chords. 
\end{lemma}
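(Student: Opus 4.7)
The plan is to construct a single cycle $C^*$ that threads through all $N := 2^{\log^{1/100}n}$ gadgets in some fixed order, with each gadget contributing at least one chord of $C^*$. Consecutive gadget segments are stitched together via paths produced by \Cref{lem:connecting-path} applied inside $H$ with the vertices used so far (together with a small buffer supplied by \Cref{lem:clean for expansion}) removed, so that the residual graph remains an $\Omega(1/\log^5 n)$-expander throughout. Since the total number of removed vertices stays at $n^{o(1)}$, well below the tolerance $\alpha^2 n/100$ of \Cref{lem:clean for expansion}, the routing never gets stuck.

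For each nice spider with center $x$, far leaves $z_1, z_3 \in L$, and short leaf $z_2 \in L$, I route $C^*$ in at $z_1$, along the two spider paths through $x$ to $z_3$, and out; this costs at most $2\log^8 n$ vertices. In a separate segment of $C^*$ I route through $z_2$ as an interior vertex, entering and leaving via two of $z_2$'s $\geq m-1$ non-$x$ neighbours in $H$, which are abundant since the used-vertex budget is much smaller than $m = 2^{\log^{1/4}n}$. Then both $x$ and $z_2$ lie on $C^*$ but the edge $xz_2$ is unused, so $xz_2$ is a chord. For each cycle extender with cycle $C$, paths $P_1, P_2$ attached at consecutive vertices $u, v \in V(C)$, and balls $A_1, A_2$, I route $C^*$ into a vertex of $A_1$, through $A_1$ (of diameter $\leq \log^7 n$) to the $A_1$-endpoint of $P_1$, along $P_1$ to $u$, around $C$ the long way from $u$ to $v$, along $P_2$ to the $A_2$-endpoint, through $A_2$ to an exit vertex, and out. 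The edge $uv$ (present in $C$ since $u,v$ are consecutive) is unused, so $uv$ is a chord of $C^*$; moreover any internal chord of $C$ (guaranteed by the construction in \Cref{lem:cycle extender}) is likewise inherited by $C^*$. The vertex cost per extender is $O(\log^{30}n)$.

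The final cycle has length $L = O(N\log^{30}n)$ and chord count $c \geq N$. Then $\log L \leq \log N + 30\log\log n = O(\log^{1/100}n)$, so $\log^{1000}L = O(\log^{10}n)$. Setting $\ell := c\cdot\log^{1000}c = N\cdot O(\log^{10}n)$, we have $\ell \leq L$ (since $\log^{10}n \leq \log^{30}n$) and $\ell/\log^{1000}\ell \leq c$ by construction. This yields a cycle of length at least $\ell$ with at least $\ell/\log^{1000}\ell$ chords, as required.

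The main technical obstacle is the careful vertex bookkeeping across the many connecting paths and spider detours: after each such operation the used set $U$ grows, and \Cref{lem:clean for expansion} must be re-applied to restore the expansion needed for the next routing step. Individually each step uses only $\log^{O(1)}n$ vertices, but the chain of $N$ such steps relies on the overall budget staying $n^{o(1)}$ so that the expansion guarantees remain in force throughout.
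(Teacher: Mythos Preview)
Your plan is essentially the paper's proof: thread a single cycle through all gadgets in order, using the large ``anchor'' sets (the $\Theta(m)$-sized neighbourhoods of the spider leaves $z_1,z_2,z_3\in L$, and the sets $A_1,A_2$ of size $n^{1/4}$ in each cycle extender) as the targets for short connecting paths, so that the edge $xz_2$ in each spider and the edge $uv$ between the attachment points of $P_1,P_2$ in each extender become chords. The paper simply observes that at every step the total number of used vertices is at most $s\cdot\log^{40}n\ll m$, so a greedy ball-growth argument in the sublinear expander $H$ finds each new path of length $\le\log^7 n$ avoiding everything used so far.

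One technical slip: you invoke \Cref{lem:connecting-path} and \Cref{lem:clean for expansion}, but both are stated for $\alpha$-expanders, whereas $H$ is only an $(\varepsilon_1,1)$-sublinear expander. The paper does not re-clean via \Cref{lem:clean for expansion} at all; it just uses the standard short-connection property of sublinear expanders directly (large sets grow to size $n/2$ in $\operatorname{polylog}(n)$ steps, so two sets of size $\gg |B|$ can be joined by a path of length $O(\log^3 n)$ avoiding $B$). Your repeated cleaning is unnecessary and, as written, cites the wrong lemma. Also, your final arithmetic defining $\ell:=c\cdot\log^{1000}c$ is more convoluted than needed: simply take $\ell$ to be the actual cycle length and check $s\ge \ell/\log^{1000}\ell$ (both the paper and your write-up are loose with the exact constants $1/100$ versus $1000$ here, but this is cosmetic).
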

\begin{proof}
Denote $s:=2^{\log^{1/100}n}$.
    Let $\{F_i\}_{i\in[s]} $ be the collection of gadgets we have at our disposal. If $F_i$ is a nice spider with leaves $\{z_1,z_2,z_3\}$, denote by $N_i^a$ the neighbourhood of $z_a$ for $a\in[3]$.
    Furthermore, split $N_i^2$ into two equal parts $L_i$ and $R_i$.
    For cycle extenders $F_i$ denote their sets of size $n^{1/4}$ with $A^1_i$ and $A_i^2$.

    If we can find vertex disjoint paths of length at most $\log^7n$ as follows, it is easy to see that we are done: 
    \begin{itemize}
        \item For each $i\in[ s-1]$, a path from $N_i^3$ to $N_{i+1}^1$; and a path from $N_{s}^3$ to $A_1^1$.
        \item For each $i\in[ s-1]$, a path from $A_i^2$ to $A_{i+1}^1$; and a path from $A_s^2$ to $L_1$
        \item For each $i\in [s-1]$ a path from $R_i$ to $L_{i+1}$; and a path from $R_{s}$ to $L_1$.
    \end{itemize}
Indeed, it is easy to see that the edges $xz_2$ in the nice spiders, and the edge adjacent to $P_1$ and $P_2$ in the cycle extenders will be chords in the created cycle, so we will have $s$ chords. The length of the created cycle is at most $10s\log^{30}n$.

Finally, note that these paths can be found by a greedy procedure. Suppose we want to find the $j$-th path (and note that we only find $3s$ paths). Assuming that each path is of length at most $\log^7 n$, we used at most $\ell=s\cdot \log^{40}n$ vertices, including the gadgets themselves. Since in every step we need to connect sets of size at least $m>\ell \log^{10}n$, we can successfully avoid all previously used vertices with a new path of length at most $\log^7 n$.

The total length of the obtained cycle is at most $10s\log^{30}n$ and we have $s=2^{\log^{1/100}n}$ chords, hence we are done.
\end{proof}
\subsection{Controling high degree vertices}

    \begin{claim}
        Let $R$ be the set of vertices of degree at least $4$ to $L$.
        Then $|R|\leq m^{1/4}$.
    \end{claim}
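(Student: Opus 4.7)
The plan is to argue by contradiction. Suppose $|R| > m^{1/4}$; I will show that this allows us to greedily extract $s := 2^{\log^{1/100} n}$ pairwise vertex-disjoint nice spiders inside $H$. Once this is accomplished, \Cref{lem:connecting gadgets} produces a cycle of some length $\ell$ with at least $\ell/\log^{1000}\ell$ chords, contradicting the standing hypothesis made at the start of the proof of \Cref{thm:main}. The reason this extraction is plausible is that each $x\in R$ is already the centre of a candidate nice spider: pick any three of its (at least four) neighbours in $L$ as the leaves $z_1,z_2,z_3$, so that every attachment path has length $1\leq \log^8 n$ and we may designate any one of the three edges as the distinguished $xz_2$.

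I plan to build the spiders one at a time. After $t<s$ spiders have been constructed, let $C_t\subseteq R$ be the set of chosen centres and $L_t\subseteq L$ the set of chosen leaves, with $|C_t|=t$ and $|L_t|=3t$, and set $U_t:=C_t\cup L_t$, so that $|U_t|\leq 4t\leq 4s$. To extend the family, I need a fresh vertex $x\in R\setminus U_t$ with $|N_H(x)\cap U_t|\leq 1$; such an $x$ still has at least $4-1=3$ neighbours in $L\setminus U_t$, any three of which can be taken as new leaves vertex-disjoint from everything used so far.

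The decisive input is that $H$ is $C_4$-free, so any two vertices share at most one common neighbour. Double counting then gives
\[
\sum_{x\in R}\binom{|N_H(x)\cap U_t|}{2}\;\leq\;\binom{|U_t|}{2}\;\leq\;\binom{4s}{2},
\]
and hence at most $\binom{4s}{2}=O(s^2)$ vertices of $R$ have two or more neighbours inside $U_t$. Together with the at most $|U_t|\leq 4s$ vertices of $U_t$ itself, the set of forbidden choices has size $O(s^2)$. Since $s^2=2^{2\log^{1/100}n}$ while $m^{1/4}=2^{\log^{1/4}(n)/4}$, and $\log^{1/4} n\gg \log^{1/100} n$ for large $n$, the assumption $|R|>m^{1/4}$ leaves strictly positive room at every one of the $s$ steps, so the greedy procedure runs to completion.

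I do not foresee a serious obstacle: the entire argument reduces to one $C_4$-free double-counting inequality plus the numerical comparison $s^2\ll m^{1/4}$, after which \Cref{lem:connecting gadgets} packages the rest of the work and delivers the contradiction. The only thing worth double-checking carefully is that the greedy can reach $t=s-1$, i.e.\ that $|R|-4s-\binom{4s}{2}\geq 1$ for large $n$, which holds by the exponential gap between $m^{1/4}$ and $s^2$.
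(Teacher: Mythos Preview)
Your proposal is correct and follows essentially the same approach as the paper: both arguments exploit $C_4$-freeness to extract at least $2^{\log^{1/100} n}$ pairwise disjoint $3$-star nice spiders centred in $R$ with leaves in $L$, and then invoke \Cref{lem:connecting gadgets} for the contradiction. The only cosmetic difference is that the paper takes a maximal family of such stars and uses pigeonhole on pairs inside it to exhibit a $C_4$ directly if the family is too small, whereas you run the same double-counting step iteratively to show a greedy construction never stalls; the underlying inequality $\sum_{x}\binom{|N(x)\cap U|}{2}\le\binom{|U|}{2}$ is identical in both.
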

    \begin{proof}
        
        Suppose $|R|\geq m^{1/4}$; we will show that then there exists a collection of $m^{1/8}/8\geq 2^{\log^{1/100}(n)}$ vertices in $R$ that are roots of vertex disjoint nice spiders, so we would get many gadgets and thus a contradiction by \Cref{lem:connecting gadgets}.

        Let $\mathcal S$ a largest collection of disjoint nice spiders that are $3$-stars with centers in $R$; denote by $R'$ its centers and assume $|R'|\leq m^{1/8}/8$. Now, each vertex $v\in R\setminus R'$ has at most $2 $ neighbours in $L$ outside of $\mathcal S$, as otherwise we get a new nice spider which is in fact a $3$-star rooted at $v$.     
        Thus each $v\in R\setminus R'$ has at least $2$ neighbours in $\mathcal S$. The union of spiders in $\mathcal S$ is of size at most $4m^{1/8}/8$. Since $|R\setminus R'|\geq m^{1/4}/2$, by pigeonhole there is at least one pair of vertices in the union of the nice spiders that is adjacent to the same two vertices in $R\setminus R'$. This gives a $C_4$, a contradiction.
    \end{proof}
\begin{claim}
     $|L|\leq n/m^{1/2}$.
\end{claim}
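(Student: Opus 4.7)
My plan is to bound $|L|$ by a double-counting argument that combines the previous claim's bound on $|R|$ with the $C_4$-freeness of $H$. For each $u \in V(H)$ write $d_u := |N(u) \cap L|$. Since $H$ is $C_4$-free, any two distinct vertices of $L$ share at most one common neighbor in $H$, and counting cherries with both leaves in $L$ yields the key inequality
\[
\sum_{u \in V(H)} \binom{d_u}{2} \;\leq\; \binom{|L|}{2}.
\]

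Next I would split $\sum_u d_u = \sum_{v \in L} \deg_H(v)$ according to whether $u \in R$ or not. For $u \notin R$ the definition of $R$ forces $d_u \leq 3$, so these vertices contribute at most $3n$ in total. On the other hand, every $v \in L$ has $\deg_H(v) \geq m$, so $\sum_u d_u \geq m|L|$, meaning that the vertices of $R$ carry essentially all of this mass and must be bounded from above.

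To control $R$ I would apply Cauchy--Schwarz, using that $d_u \geq 4$ on $R$ implies $d_u^2 \leq 4\binom{d_u}{2}$:
\[
\Big(\sum_{u \in R} d_u\Big)^2 \;\leq\; |R|\cdot \sum_{u \in R} d_u^2 \;\leq\; 4 m^{1/4}\binom{|L|}{2} \;\leq\; 2 m^{1/4}|L|^2.
\]
Therefore $\sum_{u \in R} d_u \leq \sqrt{2}\, m^{1/8}|L|$, and combining gives $m|L| \leq 3n + \sqrt{2}\, m^{1/8}|L|$. Since $m = 2^{\log^{1/4} n}$ is huge, the $m^{1/8}|L|$ term is dwarfed by $m|L|$ on the left, yielding $|L| = O(n/m)$, which is considerably stronger than the advertised bound $n/m^{1/2}$.

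The conceptual crux is recognizing that the $C_4$-freeness of $H$ is precisely the tool needed here: it controls the $\ell^2$-norm of $(d_u)_{u \in R}$, which Cauchy--Schwarz then converts---using the previous claim $|R|\leq m^{1/4}$---into the $\ell^1$-bound we want. Without the $C_4$-free reduction at the start of the proof, a handful of very high-degree vertices in $R$ could inflate $\sum_u d_u$ far past $m|L|$, and no contradiction would be visible. The remaining arithmetic is entirely routine.
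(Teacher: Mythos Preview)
Your argument is correct and even yields the stronger bound $|L|=O(n/m)$, but it takes a more elaborate route than the paper. The paper's proof uses only the trivial estimate $d_u\le n$ for $u\in R$: assuming $|L|>n/m^{1/2}$, one has $\sum_u d_u\ge m|L|>nm^{1/2}$, while on the other hand $\sum_u d_u\le |R|\cdot n+3n\le m^{1/4}n+3n<nm^{1/2}$, a contradiction. In particular, your claim that ``without the $C_4$-free reduction \ldots no contradiction would be visible'' is not accurate here: the $C_4$-freeness was already cashed in during the \emph{previous} claim to obtain $|R|\le m^{1/4}$, and once that is available the crude degree bound suffices---no cherry count or Cauchy--Schwarz is needed. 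Your approach does buy a quantitatively sharper conclusion, but since the downstream argument only requires $|L|\le n/m^{1/2}$, the paper opts for the one-line count.
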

\begin{proof}
    Otherwise, the number of edges that touch $L$ is at least $nm^{1/2}/2$. On the other hand, since $|R|\leq m^{1/4}$ we have that the number of edges that touch $L$ is at most $|R|n+(n-|R|)4< nm^{1/2}$, a contradiction. 
\end{proof}

\subsection{Maximal collection of gadgets and the structure outside}
    Consider a maximal collection of disjoint gadgets, and recall that the number of them is at most $2^{\log^{1/100}(n)}$ by \Cref{lem:connecting gadgets}. Denote the vertex set of this collection by $W$.
    
    \begin{claim}\label{cl:cleaning}
    Denote $U:=W\cup R\cup L$. There exists a set $B\subset V(H)$ such that graph $G':=H\setminus (U\cup B)$ is an $\frac{1}{4\log^{2}(n)}$-expander. Furthermore, we can chose $B$ of size $|B|\leq 2|U|\log^4 n$ such that $|N_{H\setminus U}(B)|\leq |B|$
    \end{claim}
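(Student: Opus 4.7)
The plan is to apply \Cref{lem:clean for expansion} to $H$ (viewed as a standard expander) with the forbidden set $U = W \cup R \cup L$. To use that lemma, I first need to view $H$ as a plain $\alpha$-expander: since $H$ is an $(\varepsilon_1, 1)$-expander per \Cref{def:sublinear-expander}, for every $X \subseteq V(H)$ with $1 \leq |X| \leq n/2$ we have
\[
|N_H(X)| \geq \frac{\varepsilon_1}{\log^2(15|X|)}\,|X| \geq \alpha\,|X|, \qquad \alpha := \frac{\varepsilon_1}{\log^2(15n)} = \Theta\!\left(\frac{1}{\log^2 n}\right),
\]
and $\alpha < 1/100$ for large $n$.

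Next I would bound $|U|$. The dominant contribution is $|L|$: by the previous claim, $|L| \leq n/m^{1/2} = n\cdot 2^{-\log^{1/4}(n)/2}$, while the first claim gives only $|R| \leq m^{1/4} = 2^{\log^{1/4}(n)/4}$. Finally, by \Cref{lem:connecting gadgets} the maximal disjoint collection of gadgets has at most $2^{\log^{1/100}(n)}$ members, and each gadget (a nice spider or a cycle extender from \Cref{def:cycle extender}) has $O(n^{1/4}\log^{30} n)$ vertices, so $|W| \leq 2^{\log^{1/100}(n)}\cdot n^{1/4}\log^{30} n$. Summing gives $|U| \leq n\cdot 2^{-\Omega(\log^{1/4}(n))}$, which is stretched-exponentially smaller than the threshold $\alpha^2 n/100 = \Omega(n/\log^4 n)$ appearing in the hypothesis of \Cref{lem:clean for expansion}.

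Applying \Cref{lem:clean for expansion} then yields a set $B$ with $|N_{H\setminus U}(B)| \leq |B|$ and
\[
|B| \leq \frac{2|U|}{\alpha} \leq 2|U|\log^4 n,
\]
the last inequality holding for $n$ large enough that $\log^2(15n)/\varepsilon_1 \leq \log^4 n$, and such that $G' = H\setminus(U\cup B)$ is an $(\alpha/2)$-expander---in particular a $\frac{1}{4\log^2 n}$-expander after absorbing the absolute constant $\varepsilon_1$ into the slack between $\log^2(15n)$ and $\log^2 n$. The only substantive step is the size bookkeeping for $|U|$; no real obstacle arises because the dominant $|L|\leq n/m^{1/2}$ bound is smaller than any inverse polylogarithmic factor of $n$, leaving ample room to invoke \Cref{lem:clean for expansion}.
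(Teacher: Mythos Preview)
Your proof is correct and follows exactly the paper's approach: bound $|U|\le 2n/m^{1/2}$ (dominated by $|L|$) and apply \Cref{lem:clean for expansion} with $\alpha=\Theta(1/\log^2 n)$ coming from the $(\varepsilon_1,1)$-sublinear expansion of $H$. The paper's own proof is a single sentence invoking the same lemma; your only extra content is the explicit bookkeeping for $|W|$, $|R|$, $|L|$ and the translation from sublinear to constant-rate expansion, and the one hand-wave about absorbing $\varepsilon_1$ into the constant $1/4$ is no looser than the paper's treatment (indeed downstream only $1/\log^3 n$ expansion is used).
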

    \begin{proof}
        Since $|U|\leq 2n/m^{1/2}$, by \Cref{lem:clean for expansion} there is a subset $B$ as required.
    \end{proof}



    

    Consider the $2$-connected components of $G'$. Since $G'$ is an $1/\log^3 n$-expander, it contains a cycle of length at least $|G'|/\log^{13} n$ by \Cref{thm: long cycle}. Let $D$ be the component that contains such a cycle. We can think of the rest of the graph as connected \emph{clusters}, each one attached to one of the vertices of $D$.   
    By expansion, no cluster attached via a vertex to $D$ has size greater than $4\log^2 n$. Indeed if a cluster $D'$ has $4\log^2n\leq |D'|\leq |G'|/2$, then $N_{G'}(D'-v)=\{v\}$ where $v=D\cap D'$; otherwise, if $|D'|>|G'|/2$, then $N(V(G')-D')=\{v\}$ which again is a contradiction as $|D|-1\leq |V(G')-D'|\leq n/2$.\\
    Notice that this bound on the clusters implies that $|D|>|G'|/4\log^2 n>n/8\log^2 n$.
    \begin{claim}
        $D$ is an $\frac{1}{\log^5(n)}$-expander and has at least $3|D|/4$ vertices of degree less than $100.$ 
    \end{claim}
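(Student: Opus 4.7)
The plan is to prove the two parts of the claim in turn: first that $D$ inherits a weak expansion from $G'$, and then that few vertices of $D$ have high degree.

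For the expansion, I will work with the \emph{cluster closure} of a set: given $X\subseteq V(D)$ with $|X|\le |D|/2$, let $\tilde X := X\cup \bigcup_{u\in X} C_u$, where $C_u$ is the union of the clusters of $G'$ hanging from $u$. By the cluster bound $|C_u|\le 4\log^2 n$ established in the preceding paragraph, $|\tilde X|\le |X|(1+4\log^2 n)$. The key identity $N_{G'}(\tilde X)=N_D(X)$ holds because any vertex outside $\tilde X$ with a $G'$-edge into $\tilde X$ can only reach a vertex of $X$ via a $D$-edge: cluster vertices are $G'$-adjacent only to their cluster-mates and their attachment (already in $X$), and such an outside vertex cannot itself be a cluster vertex for the same reason. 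If $|\tilde X|\le |G'|/2$, applying the expansion of $G'$ to $\tilde X$ yields $|N_D(X)|\ge |\tilde X|/(4\log^2 n)\ge |X|/(4\log^2 n)\ge |X|/\log^5 n$ for $n$ large.

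The harder regime is $|\tilde X| > |G'|/2$. I will proceed by contradiction: suppose $|N_D(X)| < |X|/\log^5 n$, set $Y:=N_D(X)$ and $Z:=V(D)\setminus(X\cup Y)$, so that $|Z|\ge |D|/3$ for $n$ large. Since $\widetilde{X\cup Y}\supseteq \tilde X$, we still have $|\widetilde{X\cup Y}|>|G'|/2$, so the $G'$-expansion applies to the complement $V(G')\setminus \widetilde{X\cup Y}$, a set of size below $|G'|/2$ that contains $Z$. A check symmetric to the identity above gives $N_{G'}(V(G')\setminus\widetilde{X\cup Y})\subseteq Y$: a vertex of $X$ cannot be $D$-adjacent to $Z$ (by the definition $Y=N_D(X)$), and cluster vertices inside $\widetilde{X\cup Y}$ have no $G'$-neighbors outside their own closures. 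Hence $|Y|\ge |Z|/(4\log^2 n)\ge |D|/(12\log^2 n)$, contradicting $|Y| < |X|/\log^5 n\le |D|/(2\log^5 n)$ for $n$ large.

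For the degree statement, the plan is to use the maximality of the gadget collection $W$. Since $D$ is a $2$-connected block of $G'$ and, by the previous step, a $\frac{1}{\log^5 n}$-expander, if its average degree were at least $20$ then \Cref{lem:cycle extender} would produce a cycle extender lying entirely in $V(D)\subseteq V(G') = V(H)\setminus(U\cup B)$, which is disjoint from $W$. This new gadget would contradict the maximality of the collection, so $d(D) < 20$. A simple averaging bound then gives fewer than $|D|\cdot 20/100 = |D|/5$ vertices of $D$-degree at least $100$, so at least $3|D|/4$ vertices have $D$-degree less than $100$.

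The hardest step will be the expansion argument in the case $|\tilde X|>|G'|/2$: the natural attack via the ``inner boundary'' $X^\partial:=\{v\in X:v\text{ has a $D$-neighbor in }D\setminus X\}$, combined with either the maximum-degree bound $\Delta(D)<m=2^{\log^{1/4} n}$ or the $C_4$-freeness of $D$, only lower bounds $|N_D(X)|$ by roughly $|X^\partial|/m$ or $\sqrt{|X^\partial|}$, each of which is far below the required $|X|/\log^5 n$. The trick that makes the argument work is to enlarge $\tilde X$ to $\widetilde{X\cup Y}$ before taking the complement: this preserves the ``larger than $|G'|/2$'' hypothesis and at the same time confines the neighborhood of the complement to $Y$ itself, producing a direct lower bound on $|Y|$.
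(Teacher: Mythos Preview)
Your proof is correct and follows essentially the same route as the paper's: the cluster closure $\tilde X$ is exactly the paper's $X'$, the identity $N_{G'}(\tilde X)=N_D(X)$ is used identically, and in the large case both arguments pass to a complement (you use $V(G')\setminus\widetilde{X\cup Y}$, the paper uses $V(G')\setminus (X'\cup N_{G'}(X'))$) and apply the expansion of $G'$ to bound $|Y|$ from below. The degree part is also the same idea in contrapositive form---bounding $d(D)<20$ via \Cref{lem:cycle extender} and maximality of $W$, then Markov---so there is nothing to add.
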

    \begin{proof}
        Consider $X\subset D$ of size at most $|D|/2$. Let $X'$ be the union of all the clusters attached to vertices in $X$. Note 
        that $|V(G')\setminus X'|=|D\setminus X|\geq |D|/2\geq \frac{n}{20\log^2(n)}.$
        
        Now, we have $|N_D(X)|=|N_{G'}(X')|$. If $|X'|\leq |G'|/2$ then $|N_{G'}(X')|\geq |X'|/4\log^{2}n\geq |X|/4\log^2 n$, so we get the required expansion.
        
    Otherwise, if $|X'|\geq |G'|/2$, assume for contradiction that $|N_{G'}(X')|\leq n/\log^5(n)$.
    Consider the set $S:=V(G')\setminus(X'\cup N_{G'}(X'))$. Note that $|S|\geq |D|-|X|-|N_{G'}(X')|\geq |D|/2-n/\log^5(n)\geq n/20\log^2 n$. Furthermore, 
    by definition, all the neighbours of $S$ in $G'$ are in $N_{G'}(X')$, as they cannot be in $X'$, since $S\cap N_{G'}(X')=\emptyset$. Using the expansion in $G'$, we thus get that 
    \[
      n/\log^{5}n<|S|/4\log^{2}n\leq |N_{G'}(S)|\leq |N_{G'}(X')|\leq n/\log^{5}n,
    \]
    a contradiction which completes the proof of the first part of the claim.

        For the second claim, if we assume that at least $|D|/4$ vertices have degree at least $100$, then the average degree is at least $50$, so we get another gadget by applying \Cref{lem:cycle extender} to $D$. Here we note that this is the only and crucial application of this lemma.

    \end{proof}

    \begin{claim}\label{cl:most vertices are good}
        All but at most $n/m^{1/5}<|D|/4$ vertices $v\in G'$ satisfy $d_{G'}(v)>d_H(v)-5$ and have no neighbours in $(R\cup W\cup B)\setminus L$
    \end{claim}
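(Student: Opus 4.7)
My plan is to show that the second listed property, ``no neighbour in $(R\cup W\cup B)\setminus L$,'' already forces the first property, ``$d_{G'}(v)>d_H(v)-5$,'' and to then count the number of $v\in G'$ for which the second property fails.

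For the implication, note that every $v\in G'$ satisfies $v\notin R$, since $R\subseteq U$ and $v\in H\setminus(U\cup B)$. By the very definition of $R$, this means $v$ has at most $3$ neighbours in $L$. If moreover $v$ has no neighbour in $(R\cup W\cup B)\setminus L$, then all its neighbours inside $U\cup B=L\sqcup\bigl((R\cup W\cup B)\setminus L\bigr)$ lie in $L$, so there are at most $3<5$ of them, giving $d_{G'}(v)\ge d_H(v)-3$. Hence the set of $v\in G'$ satisfying both conditions coincides with those satisfying only the second.

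It then suffices to bound the number of $v\in G'$ having some neighbour in $(R\cup W\cup B)\setminus L$, which I split into two parts. Vertices of $G'$ adjacent to $B$ number at most $|B|$ by the second conclusion of \Cref{cl:cleaning} together with the inclusion $G'\subseteq H\setminus U$. Every vertex in $(R\cup W)\setminus L$ has degree below $m$ in $H$ by the definition of $L$, so the number of $v\in G'$ adjacent to this set is at most $m(|R|+|W|)$. Feeding in the previously established $|R|\leq m^{1/4}$, $|L|\leq n/m^{1/2}$, together with $|W|\leq 2^{\log^{1/100}(n)}\cdot O(n^{1/4})$ (the number of gadgets times the maximal size of a cycle extender), we get $|U|=O(n/m^{1/2})$ and hence $|B|=O(n\log^4(n)/m^{1/2})$. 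Since $m=2^{\log^{1/4}n}$ dwarfs every polylogarithm, a routine check shows both contributions are $o(n/m^{1/5})$, and $n/m^{1/5}<|D|/4$ follows from the bound $|D|>n/(8\log^2 n)$ established just before the claim.

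The only mildly delicate step is the opening observation: noticing that the definition of $R$ automatically controls the number of $L$-neighbours, so that condition (i) drops out of condition (ii) for free. The rest is just bookkeeping with the sub-polynomial bounds already at our disposal.
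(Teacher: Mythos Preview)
Your proof is correct and follows essentially the same approach as the paper's: both arguments use that $v\notin R$ caps the number of $L$-neighbours, bound the neighbours of $(R\cup W)\setminus L$ via the degree cap $m$, and bound the neighbours of $B$ using the second conclusion of \Cref{cl:cleaning}. Your presentation is in fact slightly cleaner, since you make explicit that the second listed property implies the first (and you record the tighter bound of $3$ rather than $5$ neighbours in $L$), whereas the paper leaves this implication implicit in its final sentence.
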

    \begin{proof}
    Since $R\cap V(G')=\emptyset$, each $v\in G'$ has at most $5$ neighbours in $L$. Furthermore, $|N(W\cup R\setminus L)|\leq |R\cup W|m<\sqrt{n}$.
    By \Cref{cl:cleaning}, we have $|N_{G'}(B)|\leq|B|\leq 2n\log^4 n/m^{1/4}$.
    Thus, we are done as every vertex in $G'-(N(W\cup R\setminus L)\cup N_{G'}(B))$ has at least $d_H(v)-5$ neighbours in $G'$.
    \end{proof}

    By  the two claims, we must have at least $|D|/2$ vertices $v\in D$ such that its cluster $D_v$ is non-empty, and such that $D_v$ only contains vertices which satisfy $d_{G'}(v)>d_H(v)-5$ and have no neighbours in $(R\cup W\cup B)\setminus L$.
    Denote by $\mathcal D$ the set of such $v$.
    For each $v\in \mathcal{D}$, chose an arbitrary leaf in the block-cut tree of $G'$ which is contained in the block cut tree of $D_v$ and call $D_v'$ the subgraph of $G'$ to which it corresponds. If $c_v$ is the cut vertex by which $D_v'$ is attached to the rest of the graph, each vertex in $D_v'-c_v$ needs to have $d_G'(v)\geq d_H(v)-5$ neighbours in $D_v'$, so $|D_v'|\geq10^{100}.$   \begin{claim}\label{cl:abc}
        Let $L_1\coloneqq L\setminus W$ (the large degree vertices without the already found gadgets).
        There are no three vertices $a,b,c\in G'$ where $a,b\in D_v'$, for some $v\in D$, so that for each $x\in\{a,b,c\}$ there is a distinct neighbour $y_x\in L_1$.
    \end{claim}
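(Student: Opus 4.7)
The plan is to derive a contradiction with the maximality of the chosen gadget collection by exhibiting one more nice spider disjoint from $W$. I would centre the spider at $a$, take $y_a,y_b,y_c\in L_1\subseteq L$ as the three leaves, and realise $y_a$ as the distance-one leaf via the edge $ay_a$ (playing the role of $z_2$).

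First I would build a path $P_1$ from $a$ to $y_b$ through $b$. Since $D_v'$ is a leaf block of $G'$ and hence $2$-connected, $D_v'\setminus\{c\}$ is connected (trivially if $c\notin D_v'$, and by $2$-connectedness of $D_v'$ otherwise). Pick any path $Q_1$ from $a$ to $b$ inside $D_v'\setminus\{c\}$ and append the edge $by_b$; this gives $P_1$ of length at most $|D_v'|+1\le 4\log^2 n+1\le\log^8 n$, whose internal vertices lie in $V(D_v')\cap V(G')$ and contain neither $a$ nor $c$.

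Second, I would construct the third branch $P_3$ from $a$ to $y_c$ through $c$, internally disjoint from $P_1$. Set $B:=\operatorname{int}(P_1)$, so that $|B|\le 5\log^2 n$ and $a,c\notin B$. Using that $G'$ is an $\alpha$-expander with $\alpha=1/(4\log^2 n)$, I would first grow $\{a\}$ and $\{c\}$ into balls inside $G'\setminus B$ of size at least $2|B|/\alpha=O(\log^4 n)$; because $|B|$ is only polylogarithmic, the expansion of $G'\setminus B$ kicks in once a ball exceeds size $O(|B|/\alpha)$, so this is achievable in $O(\log^3 n)$ BFS steps. Then \Cref{lem:connecting-path}, applied with avoidance set $B$, produces a path of length $O(\log^3 n)$ between the two balls; concatenating with the short in-ball paths and the final edge $cy_c$ gives $P_3$ from $a$ to $y_c$ of total length at most $\log^7 n$.

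Combining $P_1$, the edge $ay_a$, and $P_3$ yields three internally vertex-disjoint paths from $a$ to the distinct vertices $y_a,y_b,y_c\in L$ of lengths $1$, at most $\log^8 n$, and at most $\log^8 n$ respectively---that is, a nice spider. All of its vertices lie in $V(G')\cup L_1$ and are therefore outside $W$, so it could have been added to the maximal collection of gadgets, a contradiction.

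The main obstacle is the construction of $P_3$: because $a$ and $c$ are single vertices, one must first enlarge them into sets of size $\Omega(\log^4 n)$ using the expansion of $G'$ before \Cref{lem:connecting-path} becomes applicable, all while keeping them outside the polylogarithmic avoidance set $B$. Managing this ball-growth carefully inside $G'\setminus B$---and exploiting the freedom to pick $Q_1$ avoiding $c$ given by the $2$-connectedness of $D_v'$---is where the real care is needed.
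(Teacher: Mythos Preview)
Your plan has a real gap in the construction of $P_3$. The vertex $a$ lies in the leaf block $D_v'$, so (unless $a=c_v$) \emph{all} of its $G'$-neighbours are in $D_v'$, which has size at most $4\log^2 n$. Nothing prevents your chosen path $Q_1$ from covering every $G'$-neighbour of $a$; for instance, $Q_1$ could be close to a Hamilton path of $D_v'\setminus\{c\}$. In that case $a$ is isolated in $G'\setminus B$ and your BFS ball never leaves $\{a\}$. More generally, the expansion bound $|N_{G'}(X)|\ge |X|/(4\log^2 n)$ says nothing about $N_{G'\setminus B}(X)$ while $|X|\ll 4|B|\log^2 n$, so you have no mechanism to get the ball from size~$1$ up to the threshold $2|B|/\alpha$ at which \Cref{lem:connecting-path} becomes applicable. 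You correctly identify this as the ``main obstacle'', but the sentence ``this is achievable in $O(\log^3 n)$ BFS steps'' is unsupported and in general false.

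The paper sidesteps this problem entirely by exploiting the $2$-connectivity of $D_v'$ differently: instead of one $a$--$b$ path, it takes a \emph{cycle} $C_0$ through $a$ and $b$ inside $D_v'$, and then a single shortest path in $G'$ from $C_0$ to $c$---with no avoidance required. The resulting ``lollipop'' contains a simple path having two of $a,b,c$ as endpoints and the third as an interior vertex; centring the spider at that interior vertex gives the three internally disjoint branches automatically. The point is that the cycle already packages two internally disjoint $a$--$b$ paths, so the delicate second connection through the expander that breaks your argument is never needed.
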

     \begin{proof}
         If such vertices exist, we first find a cycle that contains $a,b$ in $D_v$ because of $2$-connectivity of $D_v'$. Then we find a path from that cycle to $c$ by connectivity of $G'$. Hence there exists a path whose endpoints have neighbours in $L_1$, whose internal point has a neighbour in $L_1$ as well. This clearly creates a new nice spider as the neighbours in $L_1$ are distinct.
     \end{proof}
     
    We will now show that, since we cannot find such three vertices with neighbours in $L_1$, there is a contradiction with the fact that $H$ (our initital graph) is a robust expander.


    

    \subsection{Getting another gadget and completing the proof.}
        By \Cref{cl:abc}, for all sets $D_v'$ there is a set $B_v$ of size at most $3$, such that the whole set $D_v'-B_v$ has at most 2 neighbours in $L\setminus W$. Indeed, take the largest matching from $D_v'-c_v$ to $L\setminus W$. By \Cref{cl:abc} it is of size at most $2$. Denote by $B_v$ the set of matched vertices in $D_v'-c_v$, plus the vertex $c_v$. Clearly $|B_v|\leq 3$ and all vertices in $D_v'-B_v$ have no neighbours in $L\setminus W$ apart from maybe the two matched vertices.     
        Denote $S_v:=D_v'-B_v$.
    
     We distinguish two cases to complete the proof:\\
      \textbf{Case I: If there are at least 
        $2^{\log^{1/7}(n)}$ vertices $v\in \mathcal D$ for which        
        $|S_v|\geq \log^{1/3}(n)$.} \\ 
        Let $S$ be the union of the sets $S_v$ for those vertices, so we have $|S|:=2^{\log^{1/7}(n)}k$ for some $k\geq \log^{1/3}(n)$. 
        Recall that  the neighbourhood in $H$ of each $u\in S_v$ is contained in $V(G')\cup L$. Since $S_v$ only has at most $2$ neighbours in $L\setminus W$, at most $3$ neighbours in $G'$ (those are in $B_v\cup\{c_v\}$), we have $$|N(S)|\leq 5\cdot2^{\log^{1/7}n}+|W|\leq 6\cdot2^{\log^{1/7}n}$$
       On the other hand, by robust expansion,   we have
       \[
       |N_H(S)|\geq |S|\cdot \frac{\varepsilon_1}{\log^2(15|S|)}= \frac{\varepsilon_1 2^{\log^{1/7}(n)}k}{(\log(15k)+\log^{1/7}n)^2 }\geq 10\cdot {2^{\log^{1/7}(n)}}
       \]
       where we used that $k\ge \log^{1/3}n$, thus obtaining a contradiction.
       \\
    \textbf{Case II: At least $|\mathcal D|/2$ vertices in $\mathcal D$ satisfy  $|S_v|\leq \log^{1/3}(n)$.} There are at least $|\mathcal D|/\log^{1/3}n$ of them of the same size $t$, where $d-10\leq t\leq \log^{1/3}n$. Furthermore, among those there are $$\frac{1}{|W|^{5\log^{1/3}n}}\frac{|\mathcal D|}{\log^{1/3}n}\geq \frac{1}{2^{\log^{0.35}n}}\frac{n}{\log^{20}n}\geq t^{10}$$
    which have the exactly the same neighborhood in $W\cap L$ (since the neighbourhood of $S_v$ is at most $5|D_v'|$ in $W\cap L$, because $D_v'\cap R=\emptyset$). Let $I$ be a subset of size $t^{10}$ of such $v$, and let $X=\bigcup_{v\in I}S_v$.
    Recall that each set $S_v$ only has at most $2$ neighbours in $L\setminus W$.

    Thus we have 
   \[
\begin{aligned}
|N_H(X)|
&= 3|I| + \cancelto{0}{|N(X, R \cup W \cup B \setminus L)|} + |N_H(X, L \setminus W)| + |N_H(X, L \cap W)| \\
&\le 3t^{10}+2t^{10} + 5t \le 6t^{10}.
\end{aligned}
\]
On the other hand, by robust expansion we have that $|N_H(X)|\geq t^{11}\frac{\varepsilon_1}{\log^{2}(15t^{11})}> 6t^{10}$, a contradiction, since $t$ is large enough.
    \section{Concluding remarks}
    First we point out that with a bit more effort it is very plausible one could get a smaller constant on the power of $\log(\ell)$ but we opted to not do it to make the paper more readable.  
    
    We believe that it is probably true that a graph with sufficiently high minimum degree has a cycle which spans a linear number of chords. 
    Maybe a first step would be to prove it when the graph is regular. In particular, we conjecture the following.  
    
    \begin{conjecture}
        Let $G$ be a graph with average degree at least $C\log\log(n)$ show that it contains a cycle $C$ on $\ell$ vertices with at least $\ell/2$ chords, for some $\ell\geq 4$.
    \end{conjecture}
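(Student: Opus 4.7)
The target ratio of $\ell/2$ is a significant strengthening of the $\ell/\log^c\ell$ delivered by \Cref{thm:main}, and the natural strategy is to trade the enlarged degree budget ($C\log\log n$ versus a constant) against the losses incurred by the gadget-stitching approach. The plan is an iterative densification: starting from a cycle with a single chord, repeatedly double the chord count while enlarging the cycle length by at most a constant factor, until $\Omega(\ell)$ chords accumulate. Since each iteration should consume a bounded amount of the average-degree budget via (robust) sublinear expansion, the total number of doublings one can afford matches $\Theta(\log\log n)$, which is exactly the hypothesis of the conjecture.

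Concretely, I would first pass to a $C_4$-free subgraph of average degree $\Omega(\log\log n)$ via \Cref{thm:c4free}, and then to an $(\varepsilon_1,k)$-sublinear-expander subgraph $H$ using \Cref{thm:expander-subgraph}. The core technical target is a \emph{doubling lemma} of the following shape: given a chorded cycle $C$ of length $\ell$ with $k$ chords in $H$ whose exterior still supports $\Omega(\log\log n)$ average degree, $H$ contains a cycle of length at most $3\ell$ with at least $2k$ chords. The doubling step would first apply \Cref{lem:clean for expansion} to extract a sublinear expander $H'$ in $V(H)\setminus V(C)$, invoke the doubling lemma recursively inside $H'$ to locate a second chorded cycle $C''$ of comparable length and chord count, and then splice $C''$ into $C$ via a Menger-type weaving as in \Cref{prop:cycle extend}. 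The base case — a cycle with at least one chord — is provided by \Cref{lem:interlacing-chords}.

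The main obstacle is the splicing step itself. \Cref{prop:cycle extend} preserves half of the second cycle's vertices but is silent about how many chords survive; with $k=\Theta(\ell)$ chords, every splicing path that enters or exits between the endpoints of some chord potentially destroys that chord, so preserving a constant fraction of the chords of both $C$ and $C''$ forces the splicing endpoints into delicate regions of each cycle which cannot be prescribed by sublinear expansion alone. Overcoming this would likely require either a finer quantitative connectivity notion sensitive to the chord structure of $C$ and $C''$, or replacing greedy splicing by an absorption-style construction — a small flexible gadget attached in advance to $C$ that is capable of absorbing arbitrary chord-preserving extensions. A secondary obstacle is accounting: each doubling step must waste only a constant amount of the degree budget, since we can afford only $\Theta(\log\log n)$ iterations in total. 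As the authors remark in the concluding section, even shaving the exponent $c$ in \Cref{thm:main} is nontrivial, which strongly suggests that the full conjecture requires a genuinely new gadget beyond cycle extenders and nice spiders — for instance, a direct \emph{chord multiplier} that embeds two disjoint chorded cycles into a single slightly larger chorded cycle without incurring any logarithmic splicing overhead. Designing such a gadget, and verifying that sublinear expansion of $\Omega(\log\log n)$-degree graphs is sufficient to locate one, would be the heart of any successful attack on the conjecture.
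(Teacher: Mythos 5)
There is a fundamental mismatch here: the statement you are addressing is stated in the paper as an open \emph{conjecture} (there is no proof of it in the paper at all --- the paper's theorem only gives $\ell/\log^c\ell$ chords), and your text is likewise not a proof but a research plan whose decisive steps are explicitly left open. The central ``doubling lemma'' is never established, and as formulated it cannot work even arithmetically: if each iteration at best doubles the number of chords while allowing the cycle length to grow by a factor of $3$ (or even $2$), the chord-to-length ratio never improves beyond its starting value of roughly $1/\ell_0$, so no number of iterations can reach $\ell/2$ chords. To make an iteration scheme converge to a linear chord density you would need each step to \emph{increase} the density of chords relative to the length, and none of the tools you cite does this; in particular \Cref{prop:cycle extend} merges two cycles using just two Menger paths and is only guaranteed to preserve a single chord, a limitation you yourself acknowledge without resolving.

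The budget accounting is also unsupported. \Cref{thm:expander-subgraph} loses a constant \emph{factor} of average degree at each application (and \Cref{thm:c4free} has its own, possibly severe, degree loss), so starting from average degree $C\log\log n$ you can afford only $O(\log\log\log n)$ such extraction rounds before the degree drops below any useful constant --- not the $\Theta(\log\log n)$ rounds your plan requires. Your closing paragraph essentially concedes that a genuinely new ``chord multiplier'' gadget would be needed and that you do not know how to build one from sublinear expansion; that concession is accurate, but it means the proposal contains no proof of the conjecture, only a (flawed, as explained above) outline of where one might look.
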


     If true this would improve the results of \cite{draganic2024cycles}.
\end{proof}
\bibliographystyle{plain}

\end{document}